\DeclareMathAlphabet{\mathbbe}{U}{bbold}{m}{n}
\renewcommand{\epsilon}{\varepsilon}
\renewcommand{\phi}{\varphi}
\newcounter{Definitioncount}
\newtheorem{theorem}{Theorem}[section] 
\newtheorem{lemma}[theorem]{Lemma}
\newtheorem{definition}[theorem]{Definition}
\newtheorem{proposition}[theorem]{Proposition}
\newtheorem{corollary}[theorem]{Corollary}
\theoremstyle{definition}
\newtheorem{remark}[theorem]{Remark}
\newtheorem{example}[theorem]{Example}
\newtheoremstyle{fact}{\bigskipamount}{\medskipamount}{\upshape}{}{\itshape}{. }{ }{Fact}
\theoremstyle{fact}
\newtheoremstyle{genquest}{\bigskipamount}{\medskipamount}{\upshape}{}{\itshape}{. }{ }{General Question}
\theoremstyle{genquest}
\newtheoremstyle{step}{2\bigskipamount}{\medskipamount}{\upshape}{}{\itshape}{. }{ }{\underline{Step~\thestep}}
\theoremstyle{step}
\renewcommand{\thestep}{\arabic{step}}
\newcommand{\lra}{\longrightarrow}
\newcommand{\ra}{\rightarrow}
\newcommand{\Lra}{\Longrightarrow}
\newcommand{\ldual}[1]{\mathord{{\let\nolimits\relax\sideset{^\wedge}{}{#1}}}}
\newcommand{\laction}[2]{\mathord{{\let\nolimits\relax\sideset{^{#1}}{}{#2}}}}
\newcommand{\conj}[2]{\mathord{{\let\nolimits\relax\sideset{^{#1}}{}{#2}}}}
\newcommand{\xla}{\xleftarrow}
\newcommand{\xra}{\xrightarrow}
\def\CA{{\mathscr A}}
\def\CB{{\mathscr B}}
\def\CC{{\mathscr C}}
\def\CD{{\mathscr D}}
\def\CE{{\mathscr E}}
\def\CF{{\mathscr F}}
\def\CG{{\mathscr G}}
\def\CH{{\mathscr H}}
\def\CI{{\mathscr I}}
\def\CK{{\mathscr K}}
\def\CM{{\mathscr M}}
\def\CP{{\mathscr P}}
\def\CR{{\mathscr R}}
\def\CS{{\mathscr S}}
\def\CU{{\mathscr U}}
\def\CV{{\mathscr V}}
\def\CW{{\mathscr W}}
\def\CX{{\mathscr X}}
\newcommand{\ox}{\otimes}
\newcommand{\pto}{}
\newcommand{\pgets}{}
\DeclareRobustCommand{\pto}{\mathrel{\mathpalette\p@to@gets\to}}
\DeclareRobustCommand{\pgets}{\mathrel{\mathpalette\p@to@gets\gets}}
\newcommand{\p@to@gets}[2]{%
  \ooalign{\hidewidth$\m@th#1\mapstochar\mkern5mu$\hidewidth\cr$\m@th#1\to$\cr}%
}
\newcommand{\xmrightarrow}[1]{\overset{#1}{\pgets}}
\begin{document}

\title{Comonadic base change for enriched categories}

\author{Branko Nikoli\'c}
\address{Department of Mathematics, Macquarie University, NSW 2109, Australia}
\email{branik.mg@gmail.com}

\author{Ross Street}
\address{Department of Mathematics, Macquarie University, NSW 2109, Australia}
\email{ross.street@mq.edu.au}

\thanks{The first author gratefully acknowledges the support of an International Macquarie University Research Scholarship while the second gratefully acknowledges the support of Australian Research Council Discovery Grants DP160101519 and DP190102432.}

\date{\today}
\maketitle

\begin{abstract}
For our concepts of change of base and comonadicity, we work in the general context of the tricategory $\mathrm{Caten}$ whose objects are bicategories $\mathscr{V}$ and whose morphisms are categories enriched on two sides. For example, for any monoidal comonad $G$ on a cocomplete closed monoidal category $\CC$, the forgetful functor $U : \CC^G\to \CC$ is comonadic when regarded as a morphism in $\mathrm{Caten}$ between one-object bicategories. Other examples are provided including that obtained from any comonoidal $\CC$-enriched category.  

We show that the forgetful pseudofunctor $\mathscr{U}:\mathscr{V}^\mathscr{G}\rightarrow \mathscr{V}$ from the bicategory of Eilenberg-Moore coalgebras for a comonad $\mathscr{G}$ 
on $\CV$ in $\mathrm{Caten}$ induces a change of base pseudofunctor $\widetilde{\mathscr{U}}:\mathscr{V}^\mathscr{G}\text{-}\mathrm{Mod}\rightarrow \mathscr{V}\text{-}\mathrm{Mod}$ which is comonadic
in a bigger version of $\mathrm{Caten}$. We should emphasise that the right adjoints to $\CU$ and $\widetilde{\CU}$ generally do not have right adjoint lax functors, confirming our need to work with two-sided enrichments.   

We define Hopfness for such a comonad $\mathscr{G}$ and prove that having
that property implies $\mathscr{U}$ creates left (Kan) extensions in the bicategory $\mathscr{V}^\mathscr{G}$.
We provide conditions under which Hopfness carries over from $\mathscr{G}$ to the comonad $\widetilde{\mathscr{G}}=\widetilde{\mathscr{U}}\circ \widetilde{\mathscr{R}}$ generated by the adjunction $\widetilde{\mathscr{U}}\dashv \widetilde{\mathscr{R}}$. This has implications for characterizing the absolute colimit completion of $\mathscr{V}^\mathscr{G}$-categories. A motivating example was the monoidal category of differential graded abelian groups obtained as the category of coalgebras for a Hopf monoid in the category of abelian groups. Examples include some involving base
bicategories $\CV = \mathrm{Spn}(\CE)$ of spans in an ordinary category $\CE$ with pullbacks.
\end{abstract}
\noindent {\small{\emph{2010 Mathematics Subject Classification:} 16E45; 16D90; 18A40}
\\
{\small{\emph{Key words and phrases:} Hopf comonad; extension creation; 2-sided enrichment; differential graded category; bicategory of spans; monoidal comonad.}}
\tableofcontents

\section*{Introduction}

Monoidal comonads $G$ on monoidal categories $\CC$ arise in many parts of mathematics.
In the case where the monoidal category $\CC$ is braided, such a $G$ can be obtained from a 
bimonoid $B$ in $\CC$: the endofunctor of the comonad is $B\ox-$, the comultiplication and unit come from the comonoid structure on $B$, while the monoid structure on $B$ makes the comonad $B\ox-$ monoidal.  

For a monoidal comonad $G$ on a monoidal category $\CC$, the category $\CC^{G}$ of
Eilenberg-Moore $G$-coalgebras is monoidal and the underlying functor $U : \CC^{G}\to \CC$
is strong monoidal; see Moerdijk \cite{Moerd02} and McCrudden \cite{McC02}. 
The present paper studies such $U$ for changing base in enriched category theory:
each category $\CA$ with homs enriched in the base $\CC^G$ yields a category $U_*\CA$ with homs enriched in the base $\CC$. 

The setting for the present paper is hom enriched category theory, as developed for example by Betti et al.\cite{22}, where the base for enrichment is a bicategory $\CV$.
In order to treat both $U$ and the base change operation in the same context, we deal with change of base bicategory of the kind developed in \cite{Kelly2002} using two-sided enrichment. 
In Section~\ref{sec:2-sided} we recall the tricategory $\mathrm{Caten}$ (bold face was originally used for this) whose objects are bicategories, whose morphisms $\CA : \CW\ra \CV$ are two-sided enriched categories, and whose 2-cells $T : \CA \Rightarrow \CB$ are enriched functors.
The homs of the tricategory are actually 2-categories, composition is 2-functorial (see (3.15) of \cite{Kelly2002}), and associativities and identities are up to isomorphism.  
It is the morphisms of $\mathrm{Caten}$, more general than lax functors, that we use to change base.

A simple base change is obtained by whiskering in $\mathrm{Caten}$ with such a morphism;
this defines a 2-functor
\begin{equation}\label{simplebasechange}
\mathrm{Caten}(\CX, \CA) = \CA\circ - : \mathrm{Caten}(\CX, \CW) \lra \mathrm{Caten}(\CX, \CV) \ .
\end{equation}
It is routine then that, if $\CA$ has a right adjoint in $\mathrm{Caten}$, then this 2-functor has a 2-adjoint. 
We also have the extension of the 2-categories $\mathrm{Caten}(\CW, \CV)$ to the bicategories
$\mathrm{Moden}(\CW, \CV)$ where the morphisms are modules instead of functors.
Recall that $\mathrm{Caten}(1, \CV)$ is the usual 2-category $\CV\text{-}\mathrm{Cat}$ of $\CV$-categories,
$\CV$-functors and $\CV$-natural transformations, while $\mathrm{Moden}(1, \CV)$ is $\CV\text{-}\mathrm{Mod}$ where the morphisms are $\CV$-modules (also called profunctors and distributors).
The base change of more interest here is the normal lax functor  
 \begin{equation}\label{interestingbasechange}
\CA\circ - : \mathrm{Moden}(\CX, \CW) \lra \mathrm{Moden}(\CX, \CV) 
\end{equation}    
of the kind occurring in Proposition 7.5 of \cite{Kelly2002}. 

Our paper has four goals. The first is to show that the lax functor \eqref{interestingbasechange} has a right
adjoint in a bigger version $\mathrm{CATEN}$ of $\mathrm{Caten}$ if $\CA$ has a right adjoint in
$\mathrm{Caten}$. This is a point that we consider to be a useful addendum to \cite{Kelly2002}.   

In Section~\ref{sec:comonads}, we study pseudocomonads $\CG$ 
in the tricategory $\mathrm{Caten}$ (recalled in Section~\ref{sec:2-sided}) of two-sided enriched categories and 
identify their Eilenberg-Moore construction. For a braided monoidal base category $\CC$, comonoidal $\CC$-enriched categories provide examples in the case where the base bicategory $\CV$ has only one object with endohom category $\CC$.  

To obtain examples at the bicategory level, in Section~\ref{sec:Spans} we take any 
pullback-preserving comonad $G$ on a category $\CE$ and construct a comonad $\CG$ on the bicategory 
$\mathrm{Spn}(\CE)$ of spans in $\CE$ whose Eilenberg-Moore construction in $\mathrm{Caten}$ is
the bicategory $\mathrm{Spn}(\CE)$ of spans in $\CE^G$. 
Recall from \cite{88} that $\mathrm{Spn}(\CE)$-enriched categories relate to indexed
(or parametrized) categories.   

Section~\ref{DiffComonads} introduces {\em differential systems} which lead, in a simple way, to
comonads in $\mathrm{Caten}$. The Eilenberg-Moore construction for the comonad can be
identified in terms of the differential system. This process abstracts that for obtaining chain complexes
from graded objects. 

Our second goal is Theorem~\ref{thm:comonadicModU} which provides mild conditions
in order for a comonadic morphism $\CU$ in $\mathrm{Caten}$ with comonad $\CG$ to induce a comonadic base
change morphism $\widetilde{\CU}$ in $\mathrm{CATEN}$ with comonad $\widetilde{\CG}$. 
In the comonoidal $\CC$-enriched category example, the Eilenberg-Moore construction generally leads to a bicategory which has more than one object.    

Section~\ref{sec:Fusion} makes explicit when a comonad in $\mathrm{Caten}$ is {\em Hopf}
and interprets it in some examples including the comonoidal $\CC$-enriched category case obtained from
a Hopf $\CC$-algebroid in the sense Definition 21 of \cite{60}. 
This leads to our third goal which is to show that Hopf comonadic morphisms in $\mathrm{Caten}$
create (preserve and reflect) left Kan extensions. Kan extensions and liftings envelope various categorical notions including adjunctions (hence duals) and (weighted) (co)limits. 
Theorem \ref{thm:createKan} generalises the works of \cite{Bruguieres2011, Chikhladze2010} 
which show that the forgetful functor from the category of algebras for a Hopf opmonoidal monad is strong closed (and a dual).
Note in passing that Weber \cite{Weber2016} describes conditions under which the forgetful functor from the bicategory of pseudo-algebras (for a 2-monad on a bicategory) reflects left Kan extensions. 

Our fourth goal is reached in Section~\ref{Hbc}: the $\widetilde{\CG}$ of Theorem~\ref{thm:comonadicModU}
is Hopf if $\CG$ is Hopf and locally cocontinuous.
We then apply the theorem to monoidal comonads whose coalgebras are graded, or differential graded, abelian groups; earlier versions appeared as \cite{Nikolic2018, NikolicPhD}.

\section{Enrichment on two sides}\label{sec:2-sided}

We begin by quickly reviewing the tricategory Caten, originally introduced and justified in \cite{Kelly2002} (where the tricategory was denoted by $\mathbf{Caten}$ while $\mathit{Caten}$ denoted the bicategory obtained on ignoring the 3-cells). The general definition of tricategory as in
\cite{GPS} is rather involved; however, Caten is mildly more complicated than a bicategory: 1-cell composition is associative up to coherent isomorphisms rather than equivalences. 

To shorten notation a little, we sometimes denote the hom category $\mathscr{V}(V,V')$ of a bicategory $\mathscr{V}$ by $\mathscr{V}_V^{V'}$. Horizontal composition in $\mathscr{V}$ will be denoted by tensor product $\otimes$. 

Objects of Caten are (small) bicategories for which we use symbols such as $\mathscr{V}$, $\mathscr{W}$, and so on. An arrows $\mathscr{A}:\mathscr{W}\rightarrow \mathscr{V}$, called a {\em 2-sided enriched category}, consists of
\begin{itemize}
\item[(i)] a set $\mathrm{Ob}\mathscr{A}$ of objects together with a span
\begin{equation*}
\begin{tikzpicture}[scale=1, every node/.style={scale=1},baseline=(current  bounding  box.center)]
\node (Pb) at (0,1) {$\mathrm{Ob}\mathscr{A}$};
\node (S) at (-1.5,0) {$\mathrm{Ob}\mathscr{W}$};
\node (P) at (1.5,0) {$\mathrm{Ob}\mathscr{V}$};
\path[->,font=\normalsize,>=angle 90]
(Pb) edge node[above left] {$(-)_{-}$} (S);
\path[->,font=\normalsize,>=angle 90] 
(Pb) edge node[above right] {$(-)_{+}$} (P);
\end{tikzpicture}
\end{equation*}
assigning to each object $A$ an object $A_-$ in $\mathscr{W}$ and $A_+$ in $\mathscr{V}$, 
\item[(ii)] homs $\mathscr{A}(A,A')$, also denoted $\mathscr{A}_A^{A'}$, defined to be functors
\begin{equation*}
\mathscr{A}_A^{A'}:\mathscr{W}_{A_-}^{A'_-}\rightarrow \mathscr{V}_{A_+}^{A'_+} \ ,
\end{equation*}
\item[(iii)] unit and composition natural transformations
\begin{equation*}
\begin{tikzpicture}[xscale=1.2, yscale=0.8, every node/.style={scale=1},baseline=(current  bounding  box.center)]
\node (A1) at (-1,1) {$1$};
\node (A2) at (1,1) {$\mathscr{W}_{A_-}^{A_-}$};
\node (B2) at (1,-1) {$\mathscr{V}_{A_+}^{A_+}$};
\node (mu) at (0.2,0) {$\scriptstyle\eta_{A}\Rightarrow$};
\path[->,font=\scriptsize,>=angle 90]
(A1) edge node[above] {$1_{A_-}$} (A2);
\path[->,font=\scriptsize,>=angle 90, bend right]
(A1) edge node[below left] {$1_{A_+}$} (B2);
\path[,->,font=\scriptsize,>=angle 90]
(A2) edge node[right] {$\mathscr{A}_{A}^{A}$} (B2);
\end{tikzpicture}
\begin{tikzpicture}[xscale=1.6, yscale=0.8, every node/.style={scale=1},baseline=(current  bounding  box.center)]
\node (A1) at (-1,1) {$\mathscr{W}_{A'_-}^{A^{\prime\prime}_-}\times \mathscr{W}_{A_-}^{A'_-}$};
\node (A2) at (1,1) {$\mathscr{W}_{A_-}^{A^{\prime\prime}_-}$};
\node (B1) at (-1,-1) {$\mathscr{V}_{A'_+}^{A^{\prime\prime}_+}\times \mathscr{V}_{A_+}^{A'_+}$};
\node (B2) at (1,-1) {$\mathscr{V}_{A_+}^{A^{\prime\prime}_+}$};
\node (mu) at (0.2,0) {$\scriptstyle\mu_{AA''}^{A'}\Rightarrow$};
\path[->,font=\scriptsize,>=angle 90]
(A1) edge node[above] {$\otimes$} (A2)
(B1) edge node[below] {$\otimes$} (B2);
\path[transform canvas={xshift=9mm},->,font=\scriptsize,>=angle 90]
(A1) edge node[left] {$\mathscr{A}_{A'}^{A''}\times \mathscr{A}_A^{A'}$} (B1);
\path[transform canvas={xshift=-3mm},->,font=\scriptsize,>=angle 90]
(A2) edge node[right] {$\mathscr{A}_{A}^{A''}$} (B2);
\end{tikzpicture}
\end{equation*}
satisfying unit and associativity laws.
\end{itemize}
Composition of 2-sided enriched categories is given by composition of the spans (which uses pullback \cite{Ben1967}), composition of the functors defining homs, and pasting of the unit and multiplication natural transformations.
\begin{example}
When $\mathscr{W}=1$, $\mathscr{A}$ is precisely a category enriched in the bicategory $\mathscr{V}$. 
\end{example}
\begin{example}\label{one_object_all}
An enriched category $\CA : \CW\to \CV$ with precisely one object might be called a {\em monad} from $\CW$ to $\CV$. If furthermore the 
bicategories $\CW$ and $\CV$ also each have precisely one object then such an $\CA$ amounts to a monoidal functor $T : \CD\to \CC$ between monoidal categories (in the sense of \cite{EilKel1966}) where $\CC$ and $\CD$ are the 
endohom monoidal categories of the single objects of $\CV$ and $\CW$. When $\CD = \mathbf{1}$ then $T$ is
precisely a monoid in $\CC$.
\end{example}
\begin{example}\label{action}
Let $\CK$ be a bicategory with two full subbicategories $\CM$ and $\CP$.
Suppose $\Lambda$ is a set of morphisms $a\in \CK(M,P)$ with $M\in \CM$ and $P\in \CP$ such that each functor
$\CK(a,P') : \CK(P,P') \to \CK(M,P')$ with $P'\in \CP$ has a right adjoint $\mathrm{ran}(a,-)$.  
We can define an enriched category $\CA : \CM\to \CP$ having
$\mathrm{ob}\CA = \Lambda$ with $a_-=M\in \CM$ and $a_+=P\in \CP$.
The functor $\CA_a^{a'}$ is the composite $\CK(M,M')\xra{\CK(M,a')}\CK(M,P')\xra{\mathrm{ran}(a,-)}\CK(P,P')$.
The unit $\eta_a$ and composition $\mu_{a a''}^{a'}$ are obtained by using the universal property of the right adjoints 
$\mathrm{ran}(a,-)$ and $\mathrm{ran}(a',-)$. 
Example 2.3 (f) of \cite{Kelly2002} about two-sided monoidal actions is a special case
where $\CM$ and $\CP$ each have one object and $\CK$ has two objects.    
\end{example}

A 2-cell $F:\mathscr{A}\rightarrow \mathscr{B}$, called an {\em (enriched) functor}, consists of:
\begin{itemize}
\item[(iv)] a map of spans 
\begin{equation*}
F:\mathrm{Ob}\mathscr{A}\rightarrow \mathrm{Ob}\mathscr{B}
\end{equation*}
which means a function between the object sets such that
\begin{equation}\label{eq:obComp}
(FA)_-=A_-\;\text{and}\;(FA)_+=A_+
\end{equation}
\item[(v)] natural transformations
\begin{equation*}
F_A^{A'}:\mathscr{A}_A^{A'}\Rightarrow \mathscr{B}_{FA}^{FA'}
\end{equation*}
which are compatible with the unit and multiplication of $\mathscr{A}$ and $\mathscr{B}$.
\end{itemize}

A 3-cell $\psi:F\rightarrow E$, called an {\em (enriched) natural transformation} is a family with components 2-cells
\begin{equation*}
\psi_A:1_{A_+}\Rightarrow \mathscr{B}_{FA}^{EA}(1_{A_-})
\end{equation*}
in $\CV$ satisfying an enriched naturality condition (the coherence 2-cells we omit are set out in \cite{Kelly2002})
\begin{equation}\label{diag:enrNat}
\begin{tikzpicture}[xscale=3, yscale=0.8, every node/.style={scale=1},baseline=(current  bounding  box.center)]
\node (A1) at (-1,1) {$\mathscr{A}_A^{A'}(w)$};
\node (A2) at (1,1) {$\mathscr{B}_{FA'}^{EA'}(1_{A'_-})\otimes \mathscr{B}_{FA}^{FA'}(w)$};
\node (B1) at (-1,-1) {$ \mathscr{B}_{EA}^{EA'}(w)\otimes\mathscr{B}_{FA}^{EA}(1_{A_-})$};
\node (B2) at (1,-1) {$\mathscr{B}_{FA}^{EA'}(w)\,.$};
\path[->,font=\scriptsize,>=angle 90]
(A1) edge node[above] {$\psi_{A'}\otimes (F_A^{A'})_w$} (A2)
(B1) edge node[below] {$\mu$} (B2);
\path[->,font=\scriptsize,>=angle 90]
(A1) edge node[left] {$ (E_A^{A'})_w\otimes\psi_A$} (B1);
\path[->,font=\scriptsize,>=angle 90]
(A2) edge node[right] {$\mu$} (B2);
\end{tikzpicture}
\end{equation}

All axioms, compositions, whiskerings, and the fact that Caten is a tricategory are explained in detail in \cite{Kelly2002}. This construction in a larger universe, which contains our ambient category of sets, is denoted CATEN. Theorems below hold when Caten is substituted with CATEN, see the discussion on page 56 of \cite{Kelly2002}. 

As mentioned, it is important to realise that Caten is actually a fairly special kind of tricategory.
Without the 3-cells it is a bicategory, while the homs Caten$(\CW,\CV)$ are 2-categories.  
\begin{example}
When $\mathrm{Ob}\mathscr{A}=\mathrm{Ob}\mathscr{W}$ and $(-)_-$ is the identity function, $\mathscr{A}$ is precisely a lax functor from $\mathscr{W}$ to $\mathscr{V}$, and 2-cells are `icons' as so named in \cite{Lack2010a}.
\end{example}

\begin{remark}\label{adjinCaten}
Recall that Proposition 2.7 of \cite{Kelly2002} characterizes left adjoints
in $\mathrm{Caten}$ as those enriched categories $\CA : \CW\to \CV$ which are pseudofunctors
and are local left adjoints (that is, each functor $\CA^{A'}_A : \CW^{A'_-}_{A_-}\to \CV^{A'_+}_{A_+}$
has a right adjoint in $\mathrm{Cat}$).   
\end{remark}

\begin{example}\label{restrictedrepresentable}
Let $K$ be an object of a bicategory $\CK$ and let $\CM$ be a full subbicategory of $\CK$.
Suppose $\CX$ is a sub-pseudofunctor of the restricted representable pseudofunctor $\CM \hookrightarrow \CK\xra{\CK(K,-)}\mathrm{Cat}$ such that the right extension $\mathrm{ran}(x,x') : A\to B$ of $x'$ along $x$ exists 
(see Example~\ref{action} for the notation) for all $A, B\in \CM$, all $x\in \CX(A)$ and $x'\in \CX(B)$. 
If each functor $\CX(A)^{\mathrm{op}}\times \CX(A) \to \CM(A,B)$
admits an end then $\CX : \CM \to \mathrm{Cat}$ admits a right adjoint in $\mathrm{Caten}$.
Indeed, the right adjoint to the functor $\CX_A^B : \CM(A,B)\to [\CX(A),\CX(B)]$ takes the functor 
$h : \CX(A)\to \CX(B)$ to the end $\int_{x\in \CX(A)}\mathrm{ran}(x,h(x))$.  
In particular, $\CK$ could be any bicategory whose homs are complete lattices and which admits all right extensions,
and where $K$ is any object, $\CM = \CK$ and $\CX = \CK(K,-)$. Even more particularly, $\CK$ could be the bicategory
$\mathrm{Rel}(\CE)$ of relations in any Grothendieck topos $\CE$. 
\end{example}

Instead of (enriched) functors we could have chosen {\em enriched modules} $M:\mathscr{A}\xmrightarrow{}\mathscr{B}$ as 2-cells. Such a module consists of
\begin{itemize}
\item[(vi)]  
functors
\begin{equation*}
M_B^A:\mathscr{W}_{B_-}^{A_-}\rightarrow \mathscr{V}_{B_+}^{A_+}
\end{equation*}
\item[(vii)] action natural transformations
\begin{equation}\label{diag:actions}
\begin{aligned}
\begin{tikzpicture}[xscale=1.5, yscale=0.8, every node/.style={scale=1},baseline=(current  bounding  box.center)]
\node (A1) at (-1,1) {$\mathscr{W}_{A_-}^{A'_-}\times \mathscr{W}_{B_-}^{A_-}$};
\node (A2) at (1,1) {$\mathscr{W}_{B_-}^{A'_-}$};
\node (B1) at (-1,-1) {$\mathscr{V}_{A_+}^{A'_+}\times \mathscr{V}_{B_+}^{A_+}$};
\node (B2) at (1,-1) {$\mathscr{V}_{B_+}^{A'_+}$};
\node (mu) at (0.2,0) {$\scriptstyle\lambda_{BA'}^{A}\Rightarrow$};
\path[->,font=\scriptsize,>=angle 90]
(A1) edge node[above] {$\otimes$} (A2)
(B1) edge node[below] {$\otimes$} (B2);
\path[transform canvas={xshift=9mm},->,font=\scriptsize,>=angle 90]
(A1) edge node[left] {$\mathscr{A}_{A}^{A'}\times M_B^{A}$} (B1);
\path[transform canvas={xshift=-3mm},->,font=\scriptsize,>=angle 90]
(A2) edge node[right] {$M_{B}^{A'}$} (B2);
\end{tikzpicture}
\begin{tikzpicture}[xscale=1.5, yscale=0.8, every node/.style={scale=1},baseline=(current  bounding  box.center)]
\node (A1) at (-1,1) {$\mathscr{W}_{B'_-}^{A_-}\times \mathscr{W}_{B_-}^{B'_-}$};
\node (A2) at (1,1) {$\mathscr{W}_{B_-}^{A_-}$};
\node (B1) at (-1,-1) {$\mathscr{V}_{B'_+}^{A_+}\times \mathscr{V}_{B_+}^{B'_+}$};
\node (B2) at (1,-1) {$\mathscr{V}_{B_+}^{A_+}$};
\node (mu) at (0.2,0) {$\scriptstyle\rho_{BA}^{B'}\Rightarrow$};
\path[->,font=\scriptsize,>=angle 90]
(A1) edge node[above] {$\otimes$} (A2)
(B1) edge node[below] {$\otimes$} (B2);
\path[transform canvas={xshift=9mm},->,font=\scriptsize,>=angle 90]
(A1) edge node[left] {$M_{B'}^{A}\times \mathscr{B}_B^{B'}$} (B1);
\path[transform canvas={xshift=-3mm},->,font=\scriptsize,>=angle 90]
(A2) edge node[right] {$M_{B}^{A}$} (B2);
\end{tikzpicture}
\end{aligned}
\end{equation}
compatible with each other, and with the units and compositions in $\mathscr{A}$ and $\mathscr{B}$.
\end{itemize}

A {\em module morphism} $\sigma:M\Rightarrow N$ consists of natural transformations
\begin{equation*}
\sigma_B^A:M_B^A\Rightarrow N_B^A
\end{equation*}
compatible with the actions (\ref{diag:actions}).

Module morphisms compose and we obtain a category $\mathrm{Moden}(\mathscr{W},\mathscr{V})(\mathscr{A},\mathscr{B})$ of modules between $\mathscr{A}$ and $\mathscr{B}$. 

When $\mathscr{V}$ is locally cocomplete (including having whiskering preserve the local colimits), $\mathrm{Moden}(\mathscr{W},\mathscr{V})$ becomes a bicategory. Composition of modules is defined by the reflexive coequalizer
\begin{eqnarray}\label{modulecompo}
\xymatrix @R-3mm {
\sum_{B,B'}{M^A_{B'}\ox \CB^{B'}_B \ox N^B_C} \ar@<1.5ex>[rr]^{\phantom{aaa} \sum_B \rho\ox 1}  \ar@<-1.5ex>[rr]_{\phantom{aaa} \sum_{B'}1\ox \lambda} && \sum_B{M^A_B\ox N^B_C} \ar@<0.0ex>[rr]^{\mathrm{coeq}} && (N\circ_{\CB} M)_C^A \ . 
}\end{eqnarray}
In particular,  $\mathrm{Moden}(1,\mathscr{V})$ is the usual bicategory $\CV\text{-}\mathrm{Mod}$
of $\CV$-categories and $\CV$-modules between them. In general, 
$\mathrm{Moden}(\mathscr{W},\mathscr{V})$ is equivalent to the bicategory $\mathrm{Conv}(\mathscr{W},\mathscr{V})\text{-}\mathrm{Mod}$, where $\mathrm{Conv}$ denotes the internal hom in $\mathrm{CATEN}$ for the cartesian product of bicategories; see \cite{Kelly2002}. The bicategory $\mathrm{Conv}(\CX,\CV)$ is constructed by Day local convolution \cite{Day1981} as follows:
\begin{eqnarray*}
\mathrm{obConv}(\CX,\CV) = \mathrm{ob}\CX \times \mathrm{ob}\CV  \text{  and  }
\mathrm{Conv}(\CX,\CV)((X,V),(X',V') = [\CX(X,X'),\CV(V,V')]
\end{eqnarray*}
 with composition functors
 \begin{eqnarray*}
[\CX(X',X''),\CV(V',V'')] \times [\CX(X,X'),\CV(V,V')] \xrightarrow{\bar{\otimes}} [\CX(X,X''),\CV(V,V'')]
\end{eqnarray*}
defined by the coend formula
\begin{eqnarray}\label{convolutioncoend}
P\bar{\otimes}Q = \int^{h.k}{\CX(X,X'')(h\otimes k,-)\bullet (P(h)\otimes Q(k))} \ .
\end{eqnarray}

\begin{remark}\label{reflexivecoequsonly}
If in \eqref{modulecompo} the enriched category $\CB$ has only one object then we only need the existence of 
local reflexive coequalizers in $\CV_{C_+}^{A_+}$.  
\end{remark}
\begin{remark}\label{monoidcase}
As noted in Example~\ref{one_object_all}, monoidal functors can be regarded as two-sided enriched categories. 
If $T, S : \CD\to \CC$ are monoidal functors between monoidal categories, a module $M : T\to S$ amounts to a
functor $M :\CD\to \CC$ equipped with natural families $\lambda_{D,D'} : TD\ox MD'\to M(D\ox D')$ and 
$\rho_{D',D''} : MD'\ox SD''\to M(D'\ox D'')$ providing left and right compatible actions. 
In this situation, Remark~\ref{reflexivecoequsonly} applies to composition $T\xra{M} S\xra{N} R$: we only require the existence of reflexive coequalizers in $\CC$, which are preserved by tensoring by an object on either side, rather than requiring all colimits.
Let us denote this bicategory of monoidal functors and modules by $\mathrm{Mod1}(\CD,\CC)$.
When $\CD=\mathbf{1}$, we obtain the usual bicategory $\mathrm{Mod}(\CC)$ whose objects are monoids in $\CC$ 
(called ``rings'' or ``algebras'' in the additive case) and bimodules as morphisms.   
\end{remark}

In \cite{Kelly2002}~Example 7.4(b), a module $\mathscr{X}(T,S)$ was constructed from a cospan $\mathscr{A}\xrightarrow{S}\mathscr{X}\xleftarrow{T}\mathscr{B}$ of functors.
We will discuss the case $F_{*} = \mathscr{B}(1_{\mathscr{B}},F)$ in a bit more detail. 
Each functor $F:\mathscr{A}\rightarrow\mathscr{B}$ 
defines a module $F_{*}:\mathscr{A}\xmrightarrow{}\mathscr{B}$ by taking 
\begin{align}
(F_*)_B^A=&\mathscr{B}_B^{FA}
	:\mathscr{W}_{B_-}^{A_-}\rightarrow \mathscr{V}_{B_+}^{A_+} \nonumber\\
\lambda_{BA'}^{A}= &\otimes(\mathscr{A}_{A}^{A'}\times \mathscr{B}_B^{FA})
	\xRightarrow{1(F_A^{A'}\times 1)}
		\otimes(\mathscr{B}_{FA}^{FA'}\times \mathscr{B}_B^{FA})
		\xRightarrow{\mu_{B,FA'}^{FA}}
			\mathscr{B}_{B}^{FA'}\otimes \label{eq:indModLam}\\
\rho_{BA}^{B'}= &\otimes(\mathscr{B}_{B'}^{FA}\times \mathscr{B}_{B}^{B'})
		\xRightarrow{\mu_{B,FA}^{B'}}
			\mathscr{B}_{B}^{FA}\otimes	\,\label{eq:indModRho}
\end{align}
which is properly typed because of (\ref{eq:obComp}).
The unit and associativity axioms for the $\mu$ and $\eta$ of $\mathscr{B}$ give their compatibility with $\rho$. Compatibility of $\lambda$ with the $\mu$ and $\eta$ of $\mathscr{A}$ follows by applying their compatibility with the functor $F$ followed by their unit and associativity laws.

Similarly, each natural transformation $\psi:F\rightarrow E$ has an induced module morphism
\begin{align}
\psi_*:F_*&\rightarrow E_*\label{eq:natToMor1} \\
(\psi_*)_B^A:=\mathscr{B}_B^{FA}&\Rightarrow \mathscr{B}_B^{EA} \nonumber\\
((\psi_*)_B^A)_w:=\mathscr{B}_B^{FA}(w)&\xrightarrow{\psi_A\otimes 1} 
	\mathscr{B}_{FA}^{EA}(1_{A_-})\otimes\mathscr{B}_B^{FA}(w)\xrightarrow{\mu_{B,EA}^{FA} }\mathscr{B}_B^{EA}(w) \nonumber\,. 
\end{align}
To see that $\psi_*$ is compatible with $\lambda$, tensor diagram (\ref{diag:enrNat}) by $\mathscr{B}_B^{FA}(w')$, 
whisker the resulting square on the right with the $\mu$ of $\mathscr{B}$, and add obvious commutative squares. 
Compatibility with $\rho$ follows from associativity of the $\mu$ for $\mathscr{B}$.
Also, every module morphism between modules induced by functors  gives rise to a natural transformation: given
\begin{equation*}
\sigma_B^A:\mathscr{B}_B^{FA}\Rightarrow \mathscr{B}_B^{EA}
\end{equation*}
we can define a natural transformation $\sigma$ with components
\begin{equation}\label{eq:morToNat}
\sigma_A:1_{A_+}\xrightarrow{\eta_{FA}}\mathscr{B}_{FA}^{FA}(1_{A_-})\xrightarrow{(\sigma_{FA}^{A})_{1_{A_-}}} \mathscr{B}_{FA}^{EA}(1_{A_-})
\end{equation}
and the natural transformation axiom (\ref{diag:enrNat}) is proved by commutativity of
\begin{equation*}
\begin{tikzpicture}[xscale=2,yscale=1.1, every node/.style={scale=0.8},baseline=(current  bounding  box.center)]
\node (11) at (-0.5,0) {$\mathscr{A}_A^{A'}(w)$};
\node (12) at (0,-1.5) {$\mathscr{A}_A^{A'}(w)\otimes \mathscr{B}_{FA}^{FA}(1_{A_-})$};
\node (13) at (0,-4) {$\mathscr{A}_A^{A'}(w)\otimes \mathscr{B}_{FA}^{EA}(1_{A_-})$};

\node (21) at (1.5,0) {$\mathscr{B}_{FA'}^{FA'}(1_{A'_-})\otimes\mathscr{A}_A^{A'}(w)$};
\node (23) at (2.5,-4) {$\mathscr{B}_{EA}^{EA'}(w)\otimes\mathscr{B}_{FA}^{EA'}(1_{A_-}) $};

\node (31) at (4,0) {$\mathscr{B}_{FA'}^{EA'}(1_{A'_-})\otimes\mathscr{A}_A^{A'}(w)$};
\node (32) at (4,-2.5) {$\mathscr{B}_{FA'}^{EA'}(1_{A'_-})\otimes\mathscr{B}_{FA}^{FA'}(w)$};
\node (33) at (4.5,-4) {$\mathscr{B}_{FA}^{EA'}(w)$};

\node (i11) at (1,-1) {$\mathscr{B}_{FA}^{FA'}(w)$};
\node (i12) at (1.5,-3) {$\mathscr{B}_{FA}^{FA'}(w)\otimes\mathscr{B}_{FA}^{FA}(1_{A_-})$};
\node (i21) at (2.5,-1) {$\mathscr{B}_{FA'}^{FA'}(1_{A'_-})\otimes\mathscr{B}_{FA}^{FA'}(w)$};
\node (i22) at (3,-3) {$\mathscr{B}_{FA}^{FA'}(w)$};

\path[->,font=\scriptsize,>=angle 90]
(11) edge node[left] {$1\otimes\eta_{FA}$} (12)
(12) edge node[left] {$1\otimes(\sigma_{FA}^A)_{1_{A_-}}$} (13)
(13) edge node[below] {$(E_A^{A'})_w\otimes 1$} (23)
(23) edge node[below] {$\mu$} (33)
;
\path[->,font=\scriptsize,>=angle 90]
(11) edge node[above] {$\eta_{FA'}\otimes 1$} (21)
(21) edge node[above] {$(\sigma_{FA'}^{A'})_{1_{A'_-}}\otimes1$} (31)
(31) edge node[right] {$1\otimes (F_A^{A'})_w$} (32)
(32) edge node[right] {$\mu$} (33)
;
\path[->,font=\scriptsize,>=angle 90]
(11) edge node[below] {$(F_A^{A'})_w$} (i11)
(12) edge node[below left] {$(F_A^{A'})_w\otimes 1$} (i12)
(21) edge node[above right] {$1\otimes (F_A^{A'})_w$} (i21)
(i21) edge node[fill=white,rounded corners=1pt,inner sep=0.3pt] {$(\sigma_{FA'}^{A'})_{1_{A'_-}}\otimes 1$} (32)
(i22) edge node[above right] {$(\sigma_{FA}^{A'})_{w}$} (33)
;
\path[->,font=\scriptsize,>=angle 90]
(i11) edge node[above] {$\eta_{FA'}\otimes 1$} (i21)
(i21) edge node[left] {$\mu$} (i22)
(i11) edge node[left] {$1\otimes \eta_{FA}$} (i12)
(i12) edge node[below] {$\mu$} (i22)
(i11) edge node[fill=white,rounded corners=1pt,inner sep=0.3pt] {$1$} (i22)
;
\end{tikzpicture}
\end{equation*}
where the hexagon and the bottom right square are compatibility conditions between module morphism $\sigma$ and actions (\ref{eq:indModLam}) and (\ref{eq:indModRho}) respectively.

\begin{proposition}\label{prop:2-sidY}
The functor 
\begin{equation*}
(-)_*:\mathrm{Caten}(\mathscr{W},\mathscr{V})(\mathscr{A},\mathscr{B})
\rightarrow
\mathrm{Moden}(\mathscr{W},\mathscr{V})(\mathscr{A},\mathscr{B})
\end{equation*}
is full and faithful.
\end{proposition}
\begin{proof}
The process \eqref{eq:natToMor1} of turning a natural transformation into a module morphism has inverse defined by \eqref{eq:morToNat}, as shown by the commuting diagrams \eqref{diag:yComm}.
\end{proof}
\begin{equation}\label{diag:yComm}
\begin{aligned}
\begin{tikzpicture}[xscale=4, yscale=1.5, every node/.style={scale=0.8},baseline=(current  bounding  box.center)]
\node (A1) at (0,0) {$1_{A_+}$};
\node (A2) at (1,0) {$\mathscr{B}_{FA}^{FA}(1_{A_-})$};
\node (B1) at (0,-1) {$\mathscr{B}_{FA}^{EA}(1_{A_-})$};
\node (B2) at (1,-1) {$\mathscr{B}_{FA}^{EA}(1_{A_-})\otimes\mathscr{B}_{FA}^{FA}(1_{A_-})$};
\node (C) at (1,-2) {$\mathscr{B}_{FA}^{EA}(1_{A_-})$};
\path[->,font=\scriptsize,>=angle 90]
(A1) edge node[above] {$\eta_{FA}$} (A2)
(B1) edge node[above] {$1\otimes \eta_{FA}$} (B2)
(B1) edge node[below] {$1$} (C);
\path[->,font=\scriptsize,>=angle 90]
(A1) edge node[left] {$\psi_A$} (B1);
\path[->,font=\scriptsize,>=angle 90]
(A2) edge node[right] {$\psi_A\otimes 1$} (B2)
(B2) edge node[right] {$\mu$} (C);
\end{tikzpicture}
\begin{tikzpicture}[xscale=3.2, yscale=1.5, every node/.style={scale=0.8},baseline=(current  bounding  box.center)]
\node (C) at (0,0) {$\mathscr{B}_{B}^{FA}(w)$};
\node (B2) at (0,-1) {$\mathscr{B}_{FA}^{FA}(1_{A_-})\otimes\mathscr{B}_{B}^{FA}(w)$};
\node (B1) at (1,-1) {$\mathscr{B}_{B}^{FA}(w)$};
\node (A1) at (1,-2) {$\mathscr{B}_{B}^{EA}(w)$};
\node (A2) at (0,-2) {$\mathscr{B}_{FA}^{EA}(1_{A_-})\otimes\mathscr{B}_{B}^{FA}(w)$};

\path[<-,font=\scriptsize,>=angle 90]
(A1) edge node[below] {$\mu$} (A2)
(B1) edge node[below] {$\mu$} (B2)
(B1) edge node[above] {$1$} (C);
\path[<-,font=\scriptsize,>=angle 90]
(A1) edge node[right] {$(\sigma_B^A)_w$} (B1);
\path[<-,font=\scriptsize,>=angle 90]
(A2) edge node[left] {$(\sigma_{FA}^A)_{1_{A_-}}\otimes 1$} (B2)
(B2) edge node[left] {$\eta_{FA}\otimes 1$} (C);
\end{tikzpicture}
\end{aligned}
\end{equation}

We end this section by recalling the {\bf duality} for two-sided enriched categories defined in Section 2.9 of 
\cite{Kelly2002}. There is an involutory trihomomorphism (in the sense of \cite{GPS})
\begin{eqnarray*}
(-)^{\circ} : \mathrm{Caten} \lra \mathrm{Caten} 
\end{eqnarray*}
which is a pseudofunctor between bicategories when 3-cells are ignored.
The involution is defined on objects $\CV$ by $\CV\mapsto \CV^{\mathrm{op}}$.

\section{Base change under a left adjoint in $\mathrm{Caten}$}

As mentioned in the Introduction, a base change 2-functor of the form \eqref{simplebasechange} will have a right adjoint 2-functor if the base change morphism has a right adjoint in $\mathrm{Caten}$. The simple reason
no longer applies when we ask for a $\mathrm{Moden}$ version. Indeed, we do prove that there is a right
adjoint to the induced pseudofunctor although it is an adjoint enriched category, not an adjoint pseudofunctor.
  
Recall the base change result in \cite{Kelly2002} which appears there as Proposition 7.5 and states that right whiskering modules by a two-sided enriched category provides a normal lax functor.
\begin{theorem}\label{thm:adjointModU}
Let $\CU : \CW\to \CV$ be a morphism in $\mathrm{Caten}$ between locally cocomplete bicategories and suppose ${\CU}\dashv {\CR}$ is an adjunction in $\mathrm{Caten}$ with unit $\mathrm{g} : 1_{\CV}\to \CR\CU$. 
Then the lax functor 
\begin{align*}
\mathscr{N}:= \mathrm{Moden}(\mathscr{X},\mathscr{W})
 \xrightarrow{\widetilde{\mathscr{U}}:=\mathrm{Moden}(\mathscr{X},\mathscr{U})}
 \mathscr{M:=}\mathrm{Moden}(\mathscr{X},\mathscr{V}) \ ,
\end{align*}
given by right whiskering with ${\CU}$, has a right adjoint $\widetilde{\CR}$ in\footnote{Note that $\mathscr{N}$ and $\mathscr{M}$ may have a large set (proper class) of objects.} $\mathrm{CATEN}$. 
\end{theorem}
\begin{proof}
We will consider the case when $\mathscr{X}$ is the terminal bicategory; the general case then follows from $\mathrm{Moden}(\mathscr{W},\mathscr{V})\simeq \mathrm{Conv}(\mathscr{W},\mathscr{V})\text{-}\mathrm{Mod}$ and Proposition~\ref{Convcoalg}. 

So we are looking at $\mathscr{N}=\mathscr{W}\text{-Mod}$ and $\mathscr{M}=\mathscr{V}\text{-Mod}$.

First we show that $\widetilde{\mathscr{U}}$ has local right adjoints 
$\widetilde{\mathscr{R}}_\mathscr{A}^\mathscr{B} : \mathscr{M}(\mathscr{U}\circ \mathscr{A},\mathscr{U}\circ \mathscr{B}) \longrightarrow \mathscr{N}(\mathscr{A},\mathscr{B})$ given by 
\begin{align*}
(\mathscr{U}\circ \mathscr{A}\xmrightarrow{M}\mathscr{U}\circ \mathscr{B},\alpha) &\mapsto ( \mathscr{A}\xmrightarrow{\widetilde{\mathscr{R}}M} \mathscr{B},\widetilde{\mathscr{R}}\alpha)\\
(\sigma:M\Rightarrow N)&\mapsto (\widetilde{\mathscr{R}}\sigma:\widetilde{\mathscr{R}}M\Rightarrow \widetilde{\mathscr{R}}N)
\end{align*}
with the assignments defined by
\begin{align*}
(\widetilde{\mathscr{R}}M)_B^A&:=\mathscr{R}M_B^A\\
(\widetilde{\mathscr{R}}\alpha)_{B,A'}^{B',A}&:=(\mathscr{A}_A^{A'}\otimes\mathscr{R}M_{B'}^A\otimes\mathscr{B}_B^{B'}
\xrightarrow{\mathrm{g}\otimes 1\otimes \mathrm{g}} 
\mathscr{R}\mathscr{U}\mathscr{A}_A^{A'}
\otimes \mathscr{R}M_{B'}^A
\otimes \mathscr{R}\mathscr{U}\mathscr{B}_B^{B'} \\
&\xrightarrow{\mu^{(\mathscr{R})}} 
\mathscr{R}(\mathscr{U}\mathscr{A}_A^{A'}
\otimes M_{B'}^A
\otimes \mathscr{U}\mathscr{B}_B^{B'})
\xrightarrow{\mathscr{R}(\alpha)} 
\mathscr{R}M_B^{A'})\\
(\widetilde{\mathscr{R}}\sigma)_B^{A}&:=\mathscr{R}\sigma_B^{A}=\sigma_B^A\,.
\end{align*}
where $\alpha$ denotes a 2-sided action (the analogous 1-sided actions are denoted by $\lambda$ and $\rho$).
Actions $\widetilde{\mathscr{R}}\alpha$, (or separately $\widetilde{\mathscr{R}}\lambda$ and $\widetilde{\mathscr{R}}\rho$) are compatible with unit and composition in $\mathscr{A}$ and $\mathscr{B}$. For example, compatibility of $\rho$ with composition is shown by commutativity of the diagram (\ref{diag:rhoMuComp}).
\begin{equation}\label{diag:rhoMuComp}
\begin{tikzpicture}[xscale=2,yscale=2, every node/.style={scale=0.8},baseline=(current  bounding  box.center)]
\node (11) at (0,0) {$\mathscr{R}M_{B''}^A\otimes \mathscr{B}_{B'}^{B''}\otimes \mathscr{B}_B^{B'}$};
\node (12) at (0,-1) {$\mathscr{R}M\otimes \mathscr{R}\mathscr{U}\mathscr{B}\otimes \mathscr{B}$};
\node (13) at (0,-2) {$\mathscr{R}(M\otimes \mathscr{U}\mathscr{B})\otimes \mathscr{B}$};
\node (14) at (0,-3) {$\mathscr{R}M\otimes \mathscr{B}$};

\node (24) at (1.5,-3) {$\mathscr{R}M\otimes \mathscr{R}\mathscr{U}\mathscr{B}$};
\node (34) at (3,-3) {$\mathscr{R}(M\otimes \mathscr{U}\mathscr{B})$};
\node (44) at (4.5,-3) {$\mathscr{R}\mathscr{B}$};

\node (41) at (4.5,0) {$\mathscr{R}M\otimes \mathscr{B}$};
\node (42) at (4.5,-1) {$\mathscr{R}M\otimes \mathscr{R}\mathscr{U}\mathscr{B}$};
\node (43) at (4.5,-2) {$\mathscr{R}(M\otimes \mathscr{U}\mathscr{B})$};

\node (i11) at (1.25,-0.5) {$\mathscr{R}M\otimes \mathscr{R}\mathscr{U}\mathscr{B}\otimes \mathscr{R}\mathscr{U}\mathscr{B}$};
\node (i21) at (3.25,-0.5) {$\mathscr{R}M\otimes \mathscr{R}\mathscr{U}(\mathscr{B}\otimes \mathscr{B})$};
\node (i12) at (1.25,-1.5) {$\mathscr{R}(M\otimes \mathscr{U}\mathscr{B})\otimes \mathscr{R}\mathscr{U}\mathscr{B}$};
\node (i22) at (2.5,-2.5) {$\mathscr{R}(M\otimes \mathscr{U}\mathscr{B}\otimes \mathscr{U}\mathscr{B})$};

\node (ii1) at (2.5,-1) {$\mathscr{R}M\otimes \mathscr{R}(\mathscr{U}\mathscr{B}\otimes \mathscr{U}\mathscr{B})$};
\node (ii2) at (3.5,-1.5) {$\mathscr{R}(M\otimes \mathscr{U}(\mathscr{B}\otimes \mathscr{B}))$};

\path[->,font=\scriptsize,>=angle 90]
(11) edge node[left] {$1\otimes \mathrm{g}\otimes 1$} (12)
(12) edge node[left] {$\mu^{(\mathscr{R})}\otimes 1$} (13)
(13) edge node[left] {$\mathscr{R}\rho\otimes 1$} (14)
(14) edge node[below] {$1\otimes \mathrm{g}$} (24)
(24) edge node[below] {$\mu^{(\mathscr{R})}$} (34)
(34) edge node[below] {$\mathscr{R}\rho$} (44);

\path[->,font=\scriptsize,>=angle 90]
(11) edge node[above] {$1\otimes\mu^{(\mathscr{B})}$} (41)
(41) edge node[right] {$1\otimes\mathrm{g}$} (42)
(42) edge node[right] {$\mu^{(\mathscr{R})}$} (43)
(43) edge node[right] {$\mathscr{R}\rho$} (44);

\path[->,font=\scriptsize,>=angle 90]
(11) edge node[above right] {$1\otimes \mathrm{g}\otimes \mathrm{g}$} (i11)
(12) edge node[above left] {$1\otimes 1\otimes\mathrm{g}$} (i11)
(i11) edge node[above] {$1\otimes \mu^{(\mathscr{R}\circ\mathscr{U})}$} (i21)
(i21) edge node [fill=white,rounded corners=1pt,inner sep=0.3pt] {$1\otimes\mathscr{R}\mathscr{U}\mu^{(\mathscr{B})}$} (42)
(13) edge node[above left] {$1\otimes \mathrm{g}$} (i12)
(i11) edge node[left] {$\mu^{(\mathscr{R})}\otimes 1$} (i12)
(i12) edge node[left] {$\mathscr{R}\rho \otimes 1$} (24);

\path[->,font=\scriptsize,>=angle 90]
(i12) edge node[below left] {$\mu^{(\mathscr{R})}$}(i22) 
(i22) edge node[right] {$\mathscr{R}(\rho \otimes 1)$} (34)
(i22) edge node[below right] {$\mathscr{R}(1\otimes \mu^{(\mathscr{U}\circ\mathscr{B})})$} (43)
;
\path[->,font=\scriptsize,>=angle 90]
(i11) edge node[below left] {$1\otimes\mu^{(\mathscr{R})}$}(ii1) 
(ii1) edge node[left] {$1\otimes \mathscr{R}\mu^{(\mathscr{U})}$} (i21)
(ii1) edge node[above left] {$\mu^{(\mathscr{R})}$} (i22)
(i21) edge node[right] {$\mu^{(\mathscr{R})}$}(ii2) 
(i22) edge node[fill=white,rounded corners=1pt,inner sep=0.3pt]{$\mathscr{R}(1\otimes \mu^{(\mathscr{U})})$} (ii2)
(ii2) edge node[below left] {$\mathscr{R}(1 \otimes \mathscr{U}\mu^\mathscr{B})$} (43)
;
\end{tikzpicture}
\end{equation}
In the above diagram, the top pentagon commutes since $\mathrm{g}$ is a functor; there are naturality squares for $\mu$; and the bottom right square is obtained by applying $\mathscr{R}$ to compatibility of $\rho$ with $\mu^{(\mathscr{U}\circ \mathscr{B})}$. Similarly, functoriality of $\mathrm{g}$ with respect to identities leads to compatibility of $\widetilde{\mathscr{R}}\rho$ with $\eta^{(\mathscr{B})}$.
Compatibility of $\widetilde{\mathscr{R}}\sigma$ with $\widetilde{\mathscr{R}}\rho$ (and $\widetilde{\mathscr{R}}\lambda$) follows directly from the compatibility of $\sigma$ with $\rho$ (and $\lambda$).

The components of the unit and counit of the local adjunctions are given by components of $\gamma$ and $\epsilon$:
\begin{align*}
\tilde{\eta}_\mathscr{A}^\mathscr{B}:1_{\mathscr{N}(\mathscr{A},\mathscr{B})}
	 \Rightarrow\widetilde{\mathscr{R}}_\mathscr{A}^\mathscr{B}(\mathscr{U}\circ -) \ , \
((\tilde{\eta}_\mathscr{A}^\mathscr{B})_N)_B^A:N_B^A
	\xrightarrow{\mathrm{g}_{N_B^A}}\mathscr{R}\mathscr{U}N_B^A\\
\tilde{\epsilon}_{\mathscr{A}}^{\mathscr{B}}:
	\mathscr{U}\circ\widetilde{\mathscr{R}}_\mathscr{A}^\mathscr{B}( -)
	 \Rightarrow
	1_{\mathscr{M}(\mathscr{U}\circ\mathscr{A},\mathscr{U}\circ\mathscr{B})} \ , \
((\tilde{\epsilon}_\mathscr{A}^\mathscr{B})_M)_B^A :
	\mathscr{U}\mathscr{R}
	M_B^A
	\xrightarrow{\epsilon_{M_B^A}}M_B^A\,.
\end{align*}
They form module morphisms, as proved by diagrams

\begin{equation*}
\begin{tikzpicture}[xscale=8, yscale=1.5, every node/.style={scale=1},baseline=(current  bounding  box.center)]
\node (urmub) at (0,0) {$\mathscr{U}\mathscr{R}M_B^A\otimes
	 \mathscr{U}\mathscr{B}_{B'}^B$};
\node (urmb) at (0,-1) {$\mathscr{U}(\mathscr{R}M_B^A\otimes\mathscr{B}_{B'}^B)$};
\node (urmrub) at (0,-2) {$\mathscr{U}(\mathscr{R}M_B^A\otimes
		\mathscr{R}\mathscr{U}\mathscr{B}_{B'}^B)$};
\node (urmub2) at (0,-3) {$\mathscr{U}\mathscr{R}(M_B^A\otimes
		\mathscr{U}\mathscr{B}_{B'}^B)$};
\node (urm) at (0,-4) {$\mathscr{U}\mathscr{R}M_{B'}^A$};

\node (mub) at (1,-1) {$M_B^A\otimes\mathscr{U}\mathscr{B}_{B'}^B$};
\node (m) at (1,-2) {$M_{B'}^A$};

\node (urmurub) at (0.5,-1.25) {$\mathscr{U}\mathscr{R}M_B^A\otimes
	 \mathscr{U}\mathscr{R}\mathscr{U}\mathscr{B}_{B'}^B$};
\path[->,font=\scriptsize,>=angle 90]
(urmub) edge node [left] {$\mu^{(\mathscr{U})}$} (urmb)
(urmb) edge node [left] {$\mathscr{U}(1\otimes \mathrm{g})$} (urmrub)
(urmrub) edge node [left] {$\mathscr{U}\mu^{(\mathscr{R})}$} (urmub2)
(urmub2) edge node [left] {$\mathscr{U}\mathscr{R}\rho^{(M)}$} (urm)
(urm) edge node [below] {$\epsilon$} (m)
(urmub) edge node [above] {$\epsilon\otimes 1$} (mub)
(mub) edge node [right] {$\rho^{(M)}$} (m)
(urmub) edge node [right] {$1\otimes \mathscr{U}\mathrm{g}$} (urmurub)
(urmurub) edge node [above left] {$\epsilon\otimes\epsilon$} (mub)
(urmurub) edge node [above] {$\mu^{(\mathscr{U})}$} (urmrub)
(urmurub) edge node [right] {$\mu^{(\mathscr{U}\mathscr{R})}$} (urmub2)
;
\path[->,font=\scriptsize,>=angle 90]
(urmub2) edge 
	node [below right] {$\epsilon$} (mub)
;

\end{tikzpicture}
\end{equation*}
\begin{equation*}
\begin{tikzpicture}[xscale=6, yscale=1.5, every node/.style={scale=1},baseline=(current  bounding  box.center)]
\node (nb) at (0,0) {$N_B^A\otimes \mathscr{B}_B^{B'}$};
\node (runb) at (1,0) {$\mathscr{R}\mathscr{U}N_B^A\otimes \mathscr{B}_{B'}^B$};
\node (n) at (0,-3) {$N_{B'}^A$};
\node (runrub) at (1,-1) 
	{$\mathscr{R}\mathscr{U}N_B^A\otimes 
		\mathscr{R}\mathscr{U}\mathscr{B}_{B'}^B$};
\node (runub) at (1,-2) 
	{$\mathscr{R}(\mathscr{U}N_B^A\otimes 
		\mathscr{U}\mathscr{B}_{B'}^B)$};
\node (run) at (1,-3) {$\mathscr{R}\mathscr{U}N_{B'}^A$};
\node (irunb) at (0.35,-1.5) {$\mathscr{R}\mathscr{U}(N_B^A\otimes \mathscr{B}_B^{B'})$};
\path[->,font=\scriptsize,>=angle 90]
(nb) edge node[left] {$\rho^{(N)}$} (n)
(n) edge node[below] {$\mathrm{g}$} (run)
(nb) edge node[above] {$\mathrm{g}\otimes 1$} (runb)
(nb) edge node[right] {$\mathrm{g}$} (irunb)
(runrub) edge node[above left] {$\mu^{(\mathscr{R}\mathscr{U})}$} (irunb)
(irunb) edge node[below left] {$\mathscr{R}\mathscr{U}\rho^{(N)}$} (run)
(runub) edge node[above right] {$\mathscr{R}\mu^{(\mathscr{U})}$} (irunb)
(runb) edge node[right] {$1 \otimes \mathrm{g}$} (runrub)
(nb) edge node[above right] {$\mathrm{g}\otimes \mathrm{g}$} (runrub)
(runrub) edge node[right] {$\mu^{(\mathscr{R})}$} (runub)
(runub) edge node[right] {$\mathscr{R}\rho^{(\mathscr{U}N)}$} (run)
;
\end{tikzpicture}
\end{equation*}
and they satisfy the adjunction axioms because $\mathrm{g}$ and $\epsilon$ do. 
Since $\widetilde{\mathscr{U}}$ has local right adjoints, it preserves local colimits, which, together with pseudofunctoriality of $\mathscr{U}$, gives sufficient conditions for pseudofunctoriality of $\widetilde{\mathscr{U}}$:
\begin{equation*}
\begin{tikzpicture}[scale=1.5, every node/.style={scale=0.9},baseline=(current  bounding  box.center)]
\node (A1) at (-3.5,0.7) {
$\sum\mathscr{U}M_{B'}^A\otimes\mathscr{U}\mathscr{B}_B^{B'}\otimes\mathscr{U}N_{C'}^B$};
\node (A2) at (0,0.7) {
$\sum\mathscr{U}M_{B}^A\otimes\mathscr{U}N_{C'}^B$};
\node (A3) at (2.7,0.7) {
$(\mathscr{U}N\circ_{\mathscr{U}\mathscr{B}}\mathscr{U}M)_{C'}^A$};
\node (B1) at (-3.5,-0.7) {
$\sum \mathscr{U}(M_{B'}^A\otimes\mathscr{B}_B^{B'}\otimes N_{C'}^B)$};
\node (B2) at (0,-0.7) {
$\sum \mathscr{U}(M_{B}^A\otimes N_{C'}^B)$};
\node (B3) at (2.7,-0.7) {
$\mathscr{U}(N\circ_{\mathscr{B}}M)_{C'}^A\,.$};
\path[transform canvas={yshift=1mm},->,font=\scriptsize,>=angle 90]
(A1) edge node[above] {$\mathscr{U}\rho\otimes 1$} (A2);
\path[transform canvas={yshift=-1mm},->,font=\scriptsize,>=angle 90]
(A1)edge node[below] {$1\otimes \mathscr{U}\lambda$} (A2);
\path[->,font=\scriptsize,>=angle 90]
(A2) edge node[above] {$\text{coeq}$} (A3);
\path[transform canvas={yshift=1mm},->,font=\scriptsize,>=angle 90]
(B1) edge node[above] {$\mathscr{U}(\rho\otimes 1)$} (B2);
\path[transform canvas={yshift=-1mm},->,font=\scriptsize,>=angle 90]
(B1) edge node[below] {$\mathscr{U}(1\otimes\lambda)$} (B2);
\path[->,font=\scriptsize,>=angle 90]
(B2) edge node[below] {$\mathscr{U}(\text{coeq})$} node[above] {$\text{coeq}$} (B3);
\path[transform canvas={xshift=19mm},->,font=\scriptsize,>=angle 90]
(A1) edge node[left] {$\sum \mu^{(\mathscr{U})}$} (B1);
\path[transform canvas={xshift=-13mm},->,font=\scriptsize,>=angle 90]
(A2) edge node[right] {$\sum \mu^{(\mathscr{U})}$} (B2);
\path[dashed,->,font=\scriptsize,>=angle 90]
(A3) edge node[right] {$\mu^{(\widetilde{\mathscr{U}})}$} (B3);
\end{tikzpicture}
\end{equation*}
Since $\widetilde{\mathscr{U}}$ is a pseudofunctor and has local right adjoints, by Proposition 2.7 of \cite{Kelly2002}
(see Remark~\ref{adjinCaten}), $\widetilde{\mathscr{R}}$ extends to a 2-sided enriched category which is a right adjoint to $\widetilde{\mathscr{U}}$.
\end{proof}

Explicitly, the unit for $\widetilde{\mathscr{R}}$ is a module morphism defined using the enriched functor 
\begin{align*}
\eta_{\mathscr{A}}^{(\widetilde{\mathscr{R}})}:=(\mathrm{g}\circ \mathscr{A})_* \ ,
\end{align*}
and the multiplication components
\begin{equation*}
\widetilde{\mathscr{R}}(N)\circ_\mathscr{B}\widetilde{\mathscr{R}}(M)
\xRightarrow{\mu^{(\widetilde{\mathscr{R}})}}
\widetilde{\mathscr{R}}(N\circ_{\mathscr{U}\mathscr{B}}M)
\end{equation*}
are given by the right column of
\begin{equation*}
\hspace{-2cm}
\begin{tikzpicture}[scale=1.5, every node/.style={scale=1},baseline=(current  bounding  box.center)]
\node (A1) at (-3.5,0.7) {
$\sum\mathscr{R}M_{B'}^A\otimes\mathscr{B}_B^{B'}\otimes\mathscr{R}N_{C'}^B$};
\node (A2) at (0,0.7) {
$\sum\mathscr{R}M_{B}^A\otimes\mathscr{R}N_{C'}^B$};
\node (A3) at (2.7,0.7) {
$(\mathscr{R}N\circ_{\mathscr{B}}\mathscr{R}M)_{C'}^A$};
\node (B1) at (-3.5,-0.7) {
$\mathscr{R}\sum M_{B'}^A\otimes\mathscr{U}\mathscr{B}_B^{B'}\otimes N_{C'}^B$};
\node (B2) at (0,-0.7) {
$\mathscr{R}\sum M_{B}^A\otimes N_{C'}^B$};
\node (B3) at (2.7,-0.7) {
$\mathscr{R}(N\circ_{\mathscr{U}\mathscr{B}}M)_{C'}^A$};-
\path[transform canvas={yshift=1mm},->,font=\scriptsize,>=angle 90]
(A1) edge node[above] {$\widetilde{\mathscr{R}}\rho\otimes 1$} (A2);
\path[transform canvas={yshift=-1mm},->,font=\scriptsize,>=angle 90]
(A1)edge node[below] {$1\otimes \widetilde{\mathscr{R}}\lambda$} (A2);
\path[->,font=\scriptsize,>=angle 90]
(A2) edge node[above] {$\text{coeq}$} (A3);
\path[transform canvas={yshift=1mm},->,font=\scriptsize,>=angle 90]
(B1) edge node[above] {$\mathscr{R}(\rho\otimes 1)$} (B2);
\path[transform canvas={yshift=-1mm},->,font=\scriptsize,>=angle 90]
(B1) edge node[below] {$\mathscr{R}(1\otimes\lambda)$} (B2);
\path[->,font=\scriptsize,>=angle 90]
(B2) edge node[below] {$\mathscr{R}(\text{coeq})$} (B3);
\path[transform canvas={xshift=19mm},->,font=\scriptsize,>=angle 90]
(A1) edge node[left] {$(\mathscr{R}(i_{BB'}) \ \mu^{(\mathscr{R})} \ (1\otimes\mathrm{g}\otimes 1))_{BB'}$} (B1);
\path[transform canvas={xshift=-13mm},->,font=\scriptsize,>=angle 90]
(A2) edge node[right] {$(\mathscr{R}(i_{B}) \  \mu^{(\mathscr{R})})_{B}$} (B2);
\path[dashed,->,font=\scriptsize,>=angle 90]
(A3) edge node[right] {$\mu^{(\widetilde{\mathscr{R}})}$} (B3);
\end{tikzpicture}
\end{equation*}
where the top line is the coequalizer defining composition of those modules in $\mathscr{N}$, the bottom line is the result of applying $\mathscr{R}$ to the defining coequalizer for composition of those modules in $\mathscr{M}$, morphisms $i_{B}$ and $i_{BB'}$ are the coproduct inclusions, and $(-)_B$ denotes the induced map for mapping out of a coproduct.

\begin{remark}\label{monoidcaseadjoint}
Following Remark~\ref{monoidcase}, we point out that Theorem~\ref{thm:adjointModU} restricts to the one-object case.
In particular, if $\CC$, $\CD$ and $\CE$ are monoidal categories and both $\CC$ and $\CD$ admit reflexive coequalizers
preserved by tensoring with an object then each adjunction $\CU\dashv \CR$ with $\CU : \CD\to \CC$ a strong monoidal
functor induces an adjunction $\widetilde{\CU}\dashv \widetilde{\CR}$ in $\mathrm{Caten}$ where
$\widetilde{\CU}$ is the pseudofunctor 
$$\mathrm{Mod1}(\CE,\CU) : \mathrm{Mod1}(\CE,\CD)\lra \mathrm{Mod1}(\CE,\CC) \ .$$  
In particular, when $\CE = \mathbf{1}$, we obtain a right adjoint in $\mathrm{Caten}$ for
$\widetilde{\CU} : \mathrm{Mod}(\CD)\lra \mathrm{Mod}(\CC)$.
\end{remark}

\section{Pseudocomonads in $\text{Caten}$ and their coalgebras}\label{sec:comonads}

Pseudocomonads in a tricategory are dual to pseudomonads; see \cite{60, Lack2000}.
However, we will be more explicit in the case of interest here.

Let $\mathscr{G}:\mathscr{V}\rightarrow\mathscr{V}$ be a pseudocomonad in $\text{Caten}$; that is, a 2-sided enriched category with enriched functors
\begin{align*}
1_\mathscr{V}\xleftarrow{\mathrm{e}}\mathscr{G}\xrightarrow{\mathrm{d}}\mathscr{G}^2
\end{align*}
and three enriched natural isomorphisms 
\begin{eqnarray*}
\xymatrix{
\CG \ar[d]_{\mathrm{d}}^(0.5){\phantom{aaa}}="1" \ar[rr]^{\mathrm{d}}  && \CG^2 \ar[d]^{\CG \mathrm{d}}_(0.5){\phantom{aaa}}="2" \ar@{=>}"1";"2"^-{\cong \ \alpha}
\\
\CG^2 \ar[rr]_-{\mathrm{d}\CG} && \CG^3 
}
\qquad
\xymatrix{
& \CG \ar[ld]_{\mathrm{d}}^(0.5){\phantom{a}}="1"   \ar[rd]^{1_{\CG}}_(0.5){\phantom{a}}="2" \ar@{=>}"1";"2"^-{\cong \ \lambda}
\\
\CG^2 \ar[rr]_-{\mathrm{e}\CG} && \CG 
}
\qquad
\xymatrix{
& \CG \ar[ld]_{\mathrm{d}}^(0.5){\phantom{a}}="1"   \ar[rd]^{1_{\CG}}_(0.5){\phantom{a}}="2" \ar@{<=}"1";"2"^-{\cong \ \rho}
\\
\CG^2 \ar[rr]_-{\CG \mathrm{e}} && \CG 
}
\end{eqnarray*}
satisfying two axioms. The existence of the span morphism $\text{Ob}\mathscr{G}\xrightarrow{\text{Ob}\mathrm{e}} \text{Ob}\mathscr{V}$ forces $$G_+=G_-=(\text{Ob}\mathrm{e}) (G)=:G_0 \ ,$$ for all objects $G$. To avoid extra brackets, we are taking the objects of $\CG^3$ to be triples 
$(G,H,K)$ of objects of $\CG$ such that $G_0=H_0=K_0$. The two counit constraints $\lambda$ and $\rho$ give isomorphisms $$(\text{Ob}\mathrm{d})(G)\xrightarrow{(\lambda_G,\rho_G^{-1})}(G,G)$$ which allow $\mathrm{d}$ to be replaced, up to isomorphism, by a comultiplication given by the diagonal function on objects, and whereby $\alpha$, $\lambda$ and $\rho$ become identities. Then $\CG$ is a comonad on $\CV$ in the bicategory
obtained from $\mathrm{Caten}$ by ignoring 3-cells. We will henceforth assume we have made this replacement and simply refer to $\CG$ as a ``comonad'' in $\mathrm{Caten}$. 
Then the remaining data for $\mathscr{G}$ are given by endofunctors
\begin{align*}
\mathscr{G}_G^{G'}:\mathscr{V}_{G_0}^{G'_0}\rightarrow \mathscr{V}_{G_0}^{G'_0}
\end{align*}
and natural transformations with components
\begin{align*}
\mu_{v',v} = (\mu_{G,G''}^{G'})_{v',v}&:\mathscr{G}_{G'}^{G''}(v')\otimes \mathscr{G}_G^{G'}(v)\rightarrow \mathscr{G}_G^{G''}(v'\otimes v)\\
\eta = \eta_G&:1_{G_0}\rightarrow \mathscr{G}_G^G(1_{G_0})\\
\mathrm{d}_v = (\mathrm{d}_G^{G'})_v&:\mathscr{G}_G^{G'}(v)\rightarrow (\mathscr{G}_G^{G'})^2(v)\\
\mathrm{e}_v = (\mathrm{e}_G^{G'})_v&:\mathscr{G}_G^{G'}(v)\rightarrow v
\end{align*}
satisfying enriched functor compatibility axioms, which together with the comonad axioms, are the monoidal comonad axioms dual to the opmonoidal monad axioms appearing in \cite{Bruguieres2011}.
In particular, each $(\mathscr{G}_G^{G'}, \mathrm{e}_G^{G'}, \mathrm{d}_G^{G'})$ is a comonad on $\mathscr{V}_{G_0}^{G'_0}$ in the usual sense (in $\text{Cat}$). 

We will proceed to define a bicategory $\mathscr{V}^\mathscr{G}$ with the same objects as $\mathscr{G}$ and with homs the categories of Eilenberg-Moore-coalgebras
\begin{equation*}
\mathscr{V}^\mathscr{G}(G,G'):=\mathscr{V}(G_0,G'_0)^{\mathscr{G}(G,G')}\,.
\end{equation*}
The identity coalgebra is $(1_{G_0},\eta_G)$ and composition is given on coalgebras by
\begin{align}
\mathscr{V}^\mathscr{G}(G',G'')\times \mathscr{V}^\mathscr{G}(G,G')&\rightarrow
\mathscr{V}^\mathscr{G}(G,G'')\label{eq:tensorCoalg}\\
(v',\gamma_{v'}),(v,\gamma_{v})&\mapsto
(v'\otimes v, (\mu_{GG''}^{G'})_{v',v}\circ (\gamma_{v'}\otimes \gamma_v))\,.\nonumber
\end{align}
The assigned pair is indeed a coalgebra: compatibility with $\mathrm{d}$ is proved by\footnote{When indices are omitted they can be deduced from the context. For example, $\mathscr{G}_G^{G'}(v)$ is the full notation for $\mathscr{G}v$.}
\begin{equation*}
\begin{tikzpicture}[xscale=4,yscale=1.3, every node/.style={scale=1},baseline=(current  bounding  box.center)]
\node (M1) at (0,-1) {$v'\otimes v$};
\node (TM1) at (0,-2) {$\mathscr{G}v' \otimes \mathscr{G}v$};
\node (TE1) at (0,-3) {$\mathscr{G}(v' \otimes v)$};

\node (23) at (1,-3) {$\mathscr{G}(\mathscr{G}v' \otimes \mathscr{G}v)$};
\node (TTE) at (2,-3) {$\mathscr{G}^2(v' \otimes v)$};

\node (TM3) at (2,-1) {$\mathscr{G}(v' \otimes v)$};

\node (N) at (1,-1) {$\mathscr{G}v' \otimes \mathscr{G}v$};
\node (TN) at (1,-2) {$\mathscr{G}^2 v' \otimes \mathscr{G}^2 v$};

\path[->,font=\scriptsize,>=angle 90]
(M1) edge node[left] {$\gamma\otimes\gamma$} (TM1)
(TM1) edge node[left] {$\mu$} (TE1)
(TE1) edge node[below] {$\mathscr{G}(\gamma\otimes \gamma)$} (23)
(23) edge node[below] {$\mathscr{G}\mu$} (TTE);

\path[->,font=\scriptsize,>=angle 90]
(TM3) edge node[right] {$\mathrm{d}$} (TTE);

\path[->,font=\scriptsize,>=angle 90]
(N) edge node[above] {$\mu$} (TM3)
(M1) edge node[above] {$\gamma\otimes\gamma$} (N)
(TM1) edge node[below] {$\mathscr{G}\gamma\otimes\mathscr{G}\gamma$} (TN)
(N) edge node[left] {$\mathrm{d}\otimes \mathrm{d}$} (TN)
(TN) edge node[left] {$\mu$} (23)
(TN) edge node[above] {$\mu^{(\mathscr{G}^2)}$} (TTE);

\end{tikzpicture}
\end{equation*}
where the upper left square is a componentwise compatibility of local coalgebras $\gamma$ with comultiplication, the bottom left square is naturality of $\mu$, the triangle is the definition of composition for the composite category, and the remaining square is compatibility of the enriched functor $d$ with compositions in its source and target (a typical bimonoid axiom). Similarly, compatibility of \eqref{eq:tensorCoalg} with $\mathrm{e}$ follows from its being an enriched functor. 
The assignment extends to coalgebra morphisms since $\mu$ is natural. The unit and associativity isomorphisms are inherited from $\mathscr{V}$; they are coalgebra morphisms and satisfy the monoidale axioms because they do in $\mathscr{V}$.

There is an underlying (strict) functor
$\mathscr{U}:\mathscr{V}^\mathscr{G}
\rightarrow\mathscr{V}$
which sends $G$ to the underlying object $G_0$ in $\mathscr{V}$ and disregards the coalgebra structure on 1-cells. By construction, each $\mathscr{U}_V^{V'}$ has a right adjoint $\mathscr{R}_V^{V'}$, and by Theorem 2.7 of \cite{Kelly2002} the right adjoints are part of a 2-sided enriched category $\mathscr{R}:\mathscr{V}\rightarrow \mathscr{V}^\mathscr{G}$ with the same objects as $\mathscr{G}$, with span legs given by $G_-=G_0$ and $G_+=G$, with multiplication having components
\begin{align*}
\lefteqn{\mathscr{R}_{G'}^{G''}(v')\otimes \mathscr{R}_G^{G'}(v)\xrightarrow{(\mu_{GG''}^{G'})_{v',v}} \mathscr{R}_{G}^{G''}(v'\otimes v)} \\
& : = \ (\mathscr{G}v'\otimes \mathscr{G}v,\mu_{v',v}\circ (\mathrm{d}_{v'}\otimes \mathrm{d}_v))\xrightarrow{\mu_{v',v}}
(\mathscr{G}(v'\otimes v),\mathrm{d}_{v'\otimes v})
\end{align*}
and unit having components
\begin{align*}
1_{G} \xrightarrow{\eta_G} \mathscr{R}_G^G(1_{G_0}) \ 
: = \ (1_{G_0},\eta_G)\xrightarrow {\eta_G} (\mathscr{G}1_{G_0},\mathrm{d}_{1_{G_0}}) \,.
\end{align*}

Now we have an adjunction in $\mathrm{Caten}$.
\begin{equation}\label{diag:catenAdj}
\begin{tikzpicture}[scale=1.5, every node/.style={scale=1},baseline=(current  bounding  box.center)]
\node (A1) at (-1,0) {$\mathscr{V}^\mathscr{G}$};
\node (A2) at (1,0) {$\mathscr{V}$};
\node (ad) at (0,0) {$\bot$};
\path[,->,font=\scriptsize,>=angle 90,bend left]
(A1) edge node[above] {$\mathscr{U}$} (A2);
\path[<-,font=\scriptsize,>=angle 90,bend right]
(A1)edge node[below] {$\mathscr{R}$} (A2);
\end{tikzpicture}
\end{equation}
The counit and the unit of the adjunction are given by the enriched functors
\begin{align}\label{eq:gammaFunct}
\mathscr{U}\circ\mathscr{R}=\mathscr{G}&\xrightarrow{\mathrm{e}} 1_\mathscr{V} \ \text{ and }
1_{\mathscr{V}^\mathscr{G}}\xrightarrow{\mathrm{g}} \mathscr{R}\circ\mathscr{U}
\end{align}
where $(\text{ob}\mathrm{g})(G)=(G,G)$ and  
$\mathrm{g}_{(v,\gamma_v)} = \gamma_v : (v,\gamma_v) \rightarrow (\mathscr{G}_G^{G'}v,\mathrm{d}_v)$.

\begin{example} \label{comonoidalenrichedcats}
Let $\CC$ be a braided monoidal category. Then the 2-category $\CC\text{-}\mathrm{Cat}$
of $\CC$-enriched categories, enriched functors and natural transformations is monoidal; the symmetric case
was already in \cite{EilKel1966} and the braided in \cite{Joyal1993}. A {\em comonoidal} $\CC$-category $\CA$
is a monoidale (= pseudomonoid) in $\CC\text{-}\mathrm{Cat}^{\mathrm{op}}$. As pointed out in Section 8 of
\cite{60}, the cotensor $\CC$-functor $\mathrm{D} : \CA\to \CA\otimes \CA$ can be taken on objects to be the 
diagonal function $\mathrm{D}A = (A,A)$; then, on homs, $\mathrm{D}$ and the counit $\mathrm{E}$ provide morphisms
\begin{eqnarray*}
\mathrm{D}_A^{A'} : \CA(A,A')\to \CA(A,A')\ox \CA(A,A') \ \text{  and  } \ \mathrm{E}_A^{A'} : \CA(A,A')\to I
\end{eqnarray*}
 in $\CA$. These are equipped with coassociativity and counital constraints satisfying the two axioms for
 a comonoidale. Consequently, we obtain, as follows, a pseudocomonad $\CG$ on the bicategory 
 $\CV = \Sigma \CC$ with one object $o$ and hom category $\CV(o,o) = \CC$. The objects of $\CG$ are
 those of $\CA$. The endofunctor $\CG_A^{A'} : \CV^o_o\to \CV_o^o$ is $\CA(A,A')\ox - : \CC\to \CC$.
 Then, using the braiding on $\CC$ and composition in $\CA$, we have the composite 
 \begin{eqnarray*}
\CA(A',A'')\ox X'\ox \CA(A,A')\ox X \cong \CA(A',A'')\ox \CA(A,A') \ox X' \ox X \to \CA(A,A'')\ox X'\ox X 
\end{eqnarray*}
defining $\mu_{X'}^{X''}$ while $\eta : I\to \CA(A,A)\cong \CA(A,A)\ox I$ uses the identity structure of $\CA$
and the right unit constraint of $\CC$. The morphisms $\mathrm{d}_X : \CA(A,A')\ox X\to \CA(A,A')\ox \CA(A,A')\ox X$ and $\mathrm{e}_X : \CA(A,A')\ox X\to X$ are defined by tensoring with $\mathrm{D}_A^{A'}$
and $\mathrm{E}_A^{A'}$.

The bicategory $\CV^{\CG}$ has the same objects as $\CA$; so it is not $\Sigma$ of a monoidal category unless $\CA$ is a bimonoid in $\CC$. The hom category $\CV^{\CG(A,A')} = \CC^{\CA(A,A')\ox -}$ is the
category of coalgebras for the comonad $\CA(A,A')\ox -$ on $\CC$. For composition in the bicategory, we refer
the reader back to \eqref{eq:tensorCoalg}.  
\end{example}
\bigskip

Now we present a version of Beck's comonadicity theorem (for example, see \cite{Beck1967, Barr1985} where ``triple'' means ``monad'') for use in the rest of the chapter.
\begin{proposition}\label{prop:comonadic}
Any 2-sided enriched category $\mathscr{L}:\mathscr{W}\rightarrow \mathscr{V}$ such that
\begin{itemize}
\item[(a)] $\mathscr{L}$ has a right adjoint $\mathscr{R}$ in $\mathrm{Caten}$, 
\item[(b)] $\mathscr{L}$ is locally conservative, and
\item[(c)] $\mathscr{W}$ has, and $\mathscr{L}$ preserves, local $\mathscr{L}$-split equalizers,
\end{itemize}
determines an identity-on-objects biequivalence $\mathscr{W}\sim\mathscr{V}^{\mathscr{G}}$ over $\CV$, where $\mathscr{G}=\mathscr{L}\circ \mathscr{R}$ is the generated comonad.
\end{proposition}
\begin{proof}
By Proposition 2.7 of \cite{Kelly2002}, $\mathscr{L}$ has a right adjoint if and only if it is a pseudofunctor and each functor $\mathscr{L}(L,L')$ has a right adjoint which we denote by $\mathscr{R}(L,L')$. Then, the right adjoint $\mathscr{R}$ has the same objects as $\mathscr{L}$ (and $\mathscr{W}$, since $\mathscr{L}$ is a pseudofunctor), and homs are precisely the $\mathscr{R}(L,L')$. From the usual Beck comonadicity theorem it follows that $\mathscr{W}$and $\mathscr{V}^{\mathscr{G}}$ have
compatibly equivalent homs; since they have the same objects, we obtain the desired biequivalence.
\end{proof}

Return now to our comonad $\CG$ on $\CV$ in $\text{Caten}$. 
The category $\text{Caten}(\mathscr{X},\mathscr{V})$  has an induced comonad $\text{Caten}(\mathscr{X},\mathscr{G})$ on it. In particular, when $\mathscr{X}=\mathscr{V}^\mathscr{G}$ there is a natural coalgebra structure on $\mathscr{U}$ given by the enriched functor
\begin{equation*}
\mathscr{U}\xrightarrow{\mathscr{U}\circ\mathrm{g}}	\mathscr{G}\circ \mathscr{U}
\end{equation*}
whose effect on homs has components the coactions $\gamma_v:v\rightarrow \mathscr{G}v$.

\begin{lemma}\label{lem:whisk}
Let $\mathscr{X}$ be a bicategory. The functor
\begin{equation*}
\mathrm{Moden}(\mathscr{X},\mathscr{V}^\mathscr{G})(\mathscr{A},\mathscr{B})
\xrightarrow{\mathscr{U}\circ-}
\mathrm{Moden}(\mathscr{X},\mathscr{V})(\mathscr{U}\circ\mathscr{A},\mathscr{U}\circ\mathscr{B})
\end{equation*}
is conservative. Moreover the source has, and the functor preserves,  $(\mathscr{U}\circ-)\text{-}$split equalizers.
\end{lemma}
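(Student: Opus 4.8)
The plan is to reduce both assertions to the corresponding classical facts about the local forgetful functors $\mathscr{U}_V^{V'}:\mathscr{V}(V,V')^{\mathscr{G}(V,V')}\rightarrow\mathscr{V}(V,V')$ out of the Eilenberg--Moore coalgebra categories. The point to exploit is that a module $M:\mathscr{A}\xmrightarrow{}\mathscr{B}$ is a family of functors $M_B^A$ equipped with coherent actions as in (\ref{diag:actions}), so that invertibility of, and equalizers among, module morphisms are detected and constructed componentwise in $A,B$ and pointwise at each $x\in\mathscr{X}_{B_-}^{A_-}$, where the ambient category is a coalgebra hom of $\mathscr{V}^\mathscr{G}$.

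For conservativity I would first recall that each $\mathscr{U}_V^{V'}$ is conservative, since the inverse of the underlying morphism of an invertible coalgebra morphism is again a coalgebra morphism. Given a module morphism $\sigma:M\Rightarrow N$ with $\mathscr{U}\circ\sigma$ invertible, every component $((\mathscr{U}\circ\sigma)_B^A)_x$ is invertible in $\mathscr{V}_{B_+}^{A_+}$; conservativity of the local forgetful functor makes each $((\sigma_B^A))_x$ invertible in $(\mathscr{V}^\mathscr{G})_{B_+}^{A_+}$, so each $\sigma_B^A$ is a natural isomorphism and hence $\sigma$ is invertible in $\mathrm{Moden}(\mathscr{X},\mathscr{V}^\mathscr{G})(\mathscr{A},\mathscr{B})$.

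For the equalizer clause I would begin with a parallel pair $\sigma,\tau:N\Rightarrow P$ in the source for which $\mathscr{U}\circ-$ applied to the pair admits a split equalizer. Split equalizers are computed pointwise and are absolute, so for each $A,B$ and each $x$ the underlying pair $((\mathscr{U}\sigma_B^A)_x,(\mathscr{U}\tau_B^A)_x)$ has a split equalizer in $\mathscr{V}_{B_+}^{A_+}$. The forgetful functor from coalgebras creates equalizers of $\mathscr{U}$-split pairs (the dual of the crude Beck clause invoked in the proof of Proposition \ref{prop:comonadic}), so each such equalizer lifts uniquely to an equalizer coalgebra whose underlying object is the given split equalizer. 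I would then assemble these into functors $M_B^A$, functoriality in $x$ being supplied by the universal property applied to the naturality squares of $\sigma,\tau$, and equip $M$ with actions $\lambda,\rho$ as in (\ref{diag:actions}): because $\otimes$ preserves the absolute split equalizers and $\sigma,\tau$ are compatible with the actions of $N$ and $P$, the actions of $N$ factor uniquely along the equalizer inclusions $\iota:M\Rightarrow N$, which are monic. The module axioms, and the fact that $\iota$ is a module morphism, follow from this uniqueness; then $M$ is the equalizer of $\sigma,\tau$ in the source, and $\mathscr{U}\circ-$ preserves it since applying $\mathscr{U}\circ-$ returns the split equalizer we began with.

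The main obstacle will be this assembly step: verifying that the pointwise-created equalizer coalgebras cohere into an honest module, i.e.\ that the $M_B^A$ are functorial and that the induced $\lambda,\rho$ satisfy the compatibility and unit/associativity conditions coherently with those of $N$. The leverage throughout is absoluteness of split equalizers: every functor in sight, in particular the horizontal composition $\otimes$, the hom functors of $\mathscr{A}$ and $\mathscr{B}$, and $\mathscr{G}$ itself, preserves them, so each structure map of $N$ factors uniquely through the relevant equalizer and the required identities are forced by the universal property.
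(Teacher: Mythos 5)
Your proposal is correct and follows essentially the same route as the paper: both claims are reduced componentwise (in $A$, $B$ and pointwise in $x$) to local conservativity and local comonadicity of $\mathscr{U}_{B_+}^{A_+}$, with the coalgebra structure on the pointwise equalizers supplied by the splitting and the module structure on the assembled equalizer then verified. The only cosmetic difference is that the paper checks that the actions of the lifted equalizer are coalgebra morphisms by an explicit diagram built from the splitting maps, whereas you argue via absoluteness of split equalizers and uniqueness of factorization through the monic equalizer inclusions; both verifications go through.
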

\begin{proof}
Take modules $M,N:\mathscr{A}\xmrightarrow{}\mathscr{B}$. A module morphism $\sigma:M\Rightarrow N$ has components
\begin{equation*}
(\sigma_B^A)_x:M_{B}^{A}(x)\rightarrow N_{B}^{A}(x)
\end{equation*}
which are 2-cells in $\mathscr{V}^\mathscr{G}$, natural in $x\in \mathscr{X}_{B_-}^{A_-}$.

To prove $\mathscr{U}\circ -$ is conservative, denote by $\psi:\mathscr{U}\circ M\Rightarrow \mathscr{U}\circ N$ the inverse of $\mathscr{U}\circ \sigma$.
This precisely means that the component
\begin{equation*}
(\psi_B^A)_x:\mathscr{U}N_{B}^{A}(x)\rightarrow \mathscr{U}M_{B}^{A}(x)
\end{equation*}
is an inverse of the component 2-cell $(\sigma_B^A)_x$ in $\mathscr{V}$. Since $\mathscr{U}$ is locally conservative, $(\psi_B^A)_x$ is also a coalgebra morphism. Hence, naturality squares for $\psi_B^A$ consist of the same arrows regardless of whether it is seen as a morphism from $\mathscr{U}N_{B}^{A}$ to $\mathscr{U}M_{B}^{A}$, or from $N_{B}^{A}$ to $M_{B}^{A}$. Compatibility of $\psi$ with actions for $M$ and $N$ follows from the same compatibility conditions for $\sigma$ and the fact that they are inverse to each other.

Consider a pair $\sigma,\chi:M\Rightarrow N$ with a split equalizer
\begin{equation*}
\begin{tikzpicture}[xscale=0.8, every node/.style={scale=1},baseline=(current  bounding  box.center)]
\node (A1) at (3.5,0.7) {$\mathscr{U}\circ N$};
\node (A2) at (0,0.7) {$\mathscr{U}\circ M$};
\node (A3) at (-2.7,0.7) {$E$};
\path[transform canvas={yshift=1.7mm},<-,font=\scriptsize,>=angle 90]
(A1) edge [thick] node[transform canvas={yshift=-1.3mm},above] {$\mathscr{U}\circ \sigma$} (A2);
\path[transform canvas={yshift=-1.7mm},<-,font=\scriptsize,>=angle 90]
(A1)edge [thick] node[transform canvas={yshift=-1.3mm},above] {$\mathscr{U}\circ \chi$} (A2);
\path[transform canvas={yshift=0mm},->,font=\scriptsize,>=angle 90]
(A1)edge [bend left] node[transform canvas={yshift=-1mm},above] {$\psi$} (A2);
\path[transform canvas={yshift=0mm},<-,font=\scriptsize,>=angle 90]
(A2) edge [thick] node[transform canvas={yshift=-1mm},above] {$\xi$} (A3);
\path[transform canvas={yshift=0mm},->,font=\scriptsize,>=angle 90]
(A2)edge [bend left] node[transform canvas={yshift=-1mm},above] {$\phi$} (A3);
\end{tikzpicture}
\end{equation*}
meaning that we have the following componentwise formulas:
\begin{align*}
(\phi_B^A)_x (\xi_B^A)_x =1_{E_B^A(x)} \ , \ (\psi_B^A)_x  (\sigma_B^A)_x &= 1_{\mathscr{U}M_B^A(x)} \ , \ (\psi_B^A)_x  (\chi_B^A)_x = (\xi_B^A)_x  (\phi_B^A)_x\,.
\end{align*}
This in particular means that the pair $(\sigma_B^A)_x,(\chi_B^A)_x:M_B^A(x)\rightarrow N_B^A(x)$
has a $\mathscr{U}_{B_+}^{A_+}\text{-}$split equalizer in $\mathscr{V}_{B_{+0}}^{A_{+0}}$. Since $\mathscr{U}_{B_+}^{A_+}$ is comonadic, $(\xi_B^A)_x$ is an equalizer of $(\sigma_B^A)_x$ and $(\chi_B^A)_x$ in $\mathscr{V}_{B_{+}}^{A_{+}}$, with an algebra structure $\gamma_{E_B^A(x)}:=$ 
\begin{align*}
E_B^A(x) \xrightarrow{(\xi_B^A)_x}\mathscr{U}M_B^A(x)\xrightarrow{\gamma_{M_B^A(x)}}
	\mathscr{G}_{B_+}^{A_+}\mathscr{U}M_B^A(x)\xrightarrow{\mathscr{G}_{B_+}^{A_+}(\phi_B^A)_x}\mathscr{G}_{B_+}^{A_+} E_B^A(x)\,
\end{align*}
on its source.
The action components for the module $E$ are coalgebra morphisms: the proof for left action (dually for right) comes from the diagram (\ref{diag:actEcoalgMorph}) (all indices can be deduced from the top left term).
\begin{equation}\label{diag:actEcoalgMorph}
\begin{tikzpicture}[xscale=1.8,yscale=1.7, every node/.style={scale=1},baseline=(current  bounding  box.center)]
\node (E) at (0,0) {$\mathscr{A}_A^{A'}(x') \otimes E_B^A(x)$};
\node (M1) at (0,-1) {$\mathscr{G}\mathscr{A} \otimes M$};
\node (TM1) at (0,-2) {$\mathscr{G}\mathscr{A} \otimes \mathscr{G}M$};
\node (TE1) at (0,-3) {$\mathscr{G}\mathscr{A} \otimes\mathscr{G}E$};

\node (23) at (2.25,-3) {$G(\mathscr{A}\otimes E)$};
\node (TTE) at (4.5,-3) {$\mathscr{G}E$};

\node (M2) at (4.5,0) {$E$};
\node (TM3) at (4.5,-1) {$M$};
\node (TE2) at (4.5,-2) {$\mathscr{G}M$};

\node (N) at (2.25,-1) {$\mathscr{A}\otimes M$};
\node (TN) at (2.25,-2) {$\mathscr{G}(\mathscr{A}\otimes M)$};

\path[->,font=\scriptsize,>=angle 90]
(E) edge node[left] {$\mathrm{g}\otimes\xi$} (M1)
(M1) edge node[left] {$1\otimes\mathrm{g}$} (TM1)
(TM1) edge node[left] {$1\otimes\mathscr{G}\phi$} (TE1)
(TE1) edge node[below] {$\mu$} (23)
(23) edge node[below] {$\mathscr{G}\lambda$} (TTE);

\path[->,font=\scriptsize,>=angle 90]
(E) edge node[above] {$\lambda$} (M2)
(M2) edge node[right] {$\xi$} (TM3)
(TM3) edge node[right] {$\mathrm{g}$} (TE2)
(TE2) edge node[right] {$\mathscr{G}\phi$} (TTE);

\path[->,font=\scriptsize,>=angle 90]
(E) edge node[above] {$1\otimes \xi$} (N)
(N) edge node[above] {$\lambda$} (TM3)
(N) edge node[above] {$\mathrm{g}\otimes\mathrm{g}$} (TM1)
(TM1) edge node[below] {$\mu$} (TN)
(TN) edge node[above] {$\mathscr{G}\lambda$} (TE2)
(N) edge node[left] {$\mathrm{g}$} (TN)
(TN) edge node[left] {$\mathscr{G}(1\otimes \phi)$} (23);

\end{tikzpicture}
\end{equation}
Diagrams for compatibility of actions of $E$ with units and multiplications in $\mathscr{A}$ and $\mathscr{B}$ are the same as the ones for $\mathscr{U}\circ \mathscr{A}$ and $\mathscr{U}\circ \mathscr{B}$. This proves that $E:\mathscr{A}\xmrightarrow{} \mathscr{B}$ is a module. Components $(\xi_B^A)_x$ are natural in $x$, and compatible with actions of $E$ as coalgebra morphisms because they are natural and compatible as usual arrows. This proves that $\xi$ is a module morphism between $E$ (with coalgebra structure) and $M$.

It remains to show that $\xi$ is an equalizer of $\sigma$ and $\chi$, so assume $L\xrightarrow{\omega}M$ is another $\mathscr{V}^\mathscr{G}$-module morphism satisfying $\sigma \ \omega=\chi \ \omega$. The components of the composite $\phi \ \omega$ are coalgebra maps since $\mathscr{U}$ is locally comonadic, and naturality in $x$ and compatibility with actions follows as for $\xi$.
\end{proof}
\begin{corollary}\label{cor:whiskU}
The functor
\begin{equation*}
\mathrm{Caten}(\mathscr{X},\mathscr{V}^\mathscr{G})(\mathscr{A},\mathscr{B})
\xrightarrow{\mathscr{U}\circ-}
\mathrm{Caten}(\mathscr{X},\mathscr{V})(\mathscr{U}\circ\mathscr{A},\mathscr{U}\circ\mathscr{B})
\end{equation*}
is conservative. Moreover the source has, and the functor preserves,  $(\mathscr{U}\circ-)\text{-}$split equalizers.
\end{corollary}
\begin{proof}
This is a direct consequence of Proposition \ref{prop:2-sidY}, Lemma \ref{lem:whisk}, and commutativity of $(-)_*$ with $\mathscr{U}\circ-$.
\end{proof}
\begin{proposition}
The bicategory $\mathscr{V}^{\mathscr{G}}$ is the object of Eilenberg-Moore $\mathscr{G}$-coalgebras, in the sense of \cite{Street1972a}, for the comonad $\mathscr{G}$ in $\mathrm{Caten}$.
\end{proposition}
\begin{proof}
 Mapping out of $\mathscr{X}$ is a pseudofunctor
$\mathrm{Caten}(\mathscr{X},-):\mathrm{Caten}\rightarrow 2\text{-}\mathrm{CAT}$
 and therefore preserves adjunctions. In particular, applying it to (\ref{diag:catenAdj}) gives
\begin{equation*}
\begin{tikzpicture}[scale=1, every node/.style={scale=1},baseline=(current  bounding  box.center)]
\node (A1) at (-2,0) {$\mathrm{Caten}(\mathscr{X},\mathscr{V}^\mathscr{G})$};
\node (A2) at (2,0) {$\mathrm{Caten}(\mathscr{X},\mathscr{V})\,.$};
\node (ad) at (0,0) {$\bot$};
\path[,->,font=\scriptsize,>=angle 90,bend left]
(A1) edge node[above] {$\mathscr{U}':=\mathrm{Caten}(\mathscr{X},\mathscr{U})$} (A2);
\path[<-,font=\scriptsize,>=angle 90,bend right]
(A1)edge node[below] {$\mathscr{R}':=\mathrm{Caten}(\mathscr{X},\mathscr{R})$} (A2);
%
\end{tikzpicture}
\end{equation*}
The composite is isomorphic to $\mathrm{Caten}(\mathscr{X},\mathscr{G})$, and what remains to show is that $\mathscr{U}'$ is comonadic in the sense of Proposition \ref{prop:comonadic}. It has a right adjoint $\mathscr{R}'$, and the rest follows from Corollary \ref{cor:whiskU}. 
\end{proof}
\begin{proposition}\label{Convcoalg}
\begin{eqnarray*}
\mathrm{Conv}(\CX,\CV^{\CG})\cong \mathrm{Conv}(\CX,\CV)^{\mathrm{Conv}(\CX,\CG)} 
\end{eqnarray*}

\end{proposition}
\begin{proof}
We have
\begin{align*}
\mathrm{obConv}(\CX,\CV^{\CG}) & = \mathrm{ob}\CX \times \mathrm{ob}\CV^{\CG} \\
& = \mathrm{ob}\CX \times \mathrm{ob}\CG \\
& = \mathrm{obConv}(\CX, \CG) \\
& = \mathrm{obConv}(\CX,\CV)^{\mathrm{Conv}(\CX,\CG)} \ .
\end{align*}
Furthermore,
\begin{align*}
\mathrm{Conv}(\CX,\CV^{\CG})((X,G),(X',G')) & = [\CX(X,X'),\CV^{\CG}(G,G')] \\
& = [\CX(X,X'),\CV(G_0,G'_0)^{\CG(G,G')}] \\
& \cong [\CX(X,X'),\CV(G_0,G'_0)]^{[\CX(X,X'),\CG(G,G')]} \\
& = \mathrm{Conv}(\CX,\CV)^{\mathrm{Conv}(\CX,\CG)}((X,G),(X',G')) \ .
\end{align*}
Finally recall that the underlying functor for a category of Eilenberg-Moore coalgebras
creates colimits, so horizontal composition using the convolution formula \eqref{convolutioncoend}
respects the above identifications. 
\end{proof}

\section{Spans in categories of coalgebras}\label{sec:Spans}

We prove an adjunction result for bicategories $\mathrm{Spn}(\CE)$ of spans on a categories $\CE$ with pullbacks; see \cite{Ben1967} much like Theorem~\ref{thm:adjointModU} for the bicategories $\CV\text{-}\mathrm{Mod}$.

Then we provide a class of examples of comonads on bicategories $\mathrm{Spn}(\CE)$ of the kind in Section~\ref{sec:comonads}. This begins with any pullback preserving comonad $G$ on $\CE$. 
In particular, $\CE$ could be a topos, in which case the category $\CE^{G}$
of $G$-coalgebras is also a topos (for example, see \cite{Johnst2002} Section A, Remark 4.2.3). 
Another motivation and application lies in
the connection between categories enriched in bicategories of the form $\mathrm{Spn}(\CE)$
and locally small categories parametrized (or indexed) over $\CE$; see \cite{88}. 

The objects of the bicategory $\mathrm{Spn}(\CE)$ are those of $\CE$. The morphisms
$(u,S,v) : A \to A'$ are spans in $\CE$; that is, diagrams of the form $A\xleftarrow{u}S\xrightarrow{v} A'$.
The 2-cells $f : (u,S,v)\Rightarrow (r,T,s) : A \to A'$ are morphisms $S\xrightarrow{f} T$ such that
$r f = u$ and $s f = v$. 
Span composition is defined using pullback: 
$$(u',S',v')\ox (u,S,v) = (A \xra{(u,S,v)} A'\xra{(u',S',v')} A'') : = (A \xra{(up,P,v'q)} A'')$$ 
where the square in the diagram \eqref{spancomp} is a pullback. The identity morphism of $A$
is the span $A\xra{(1_A,A,1_A)}A$.
\begin{equation}\label{spancomp}
\begin{aligned}
\xymatrix{
& P \ar[r]^-{q} \ar[d]_-{p} & S' \ar[d]^-{u'} \ar[r]^-{v'} &A'' \\
A & S \ar[r]_-{v} \ar[l]^-{u} & A'}
\end{aligned}
\end{equation}

For any morphism $h : A\to B$ in $\CE$, we write $h_*$ and $h^*$ for
the morphisms $A\xrightarrow{(1_A,A,h)}B$ and $B\xrightarrow{(h,A,1_A)}A$, respectively,
in $\mathrm{Spn}(\CE)$ where we have an adjunction $h_*\dashv h^*$. 

\begin{remark}\label{CEgood} If $\CE$ is cocomplete and each morphism is exponentiable (= powerful) then
the bicategory $\mathrm{Spn}(\CE)$ admits colimits in the hom categories, all right extensions,
and all right liftings. So $\mathrm{Spn}(\CE)$ satisfies the local cocompleteness hypothesis
on the $\CV$ in Theorem~\ref{thm:comonadicModU}, for example.    
\end{remark}

Each functor $U : \CF \to \CE$ between categories with pullbacks determines an oplax functor
$\CU : \mathrm{Spn}(\CF) \to \mathrm{Spn}(\CE)$ agreeing with $U$ on objects and defined
on hom categories by the functors
$\CU^{X'}_X : \mathrm{Spn}(\CF)(X,X') \to \mathrm{Spn}(\CE)(UX,UX')$
which take the span 
$(r,T,s) :  X\to X'$ in $\CF$ to the span $(Ur,UT,Us) :  UX\to UX'$ in $\CE$.

The following is routine.
\begin{lemma}\label{Local_adj_span} 
Suppose $U : \CF \to \CE$ is a functor between categories with pullbacks.
If the functor $U$ has a right adjoint
$R$ with unit $\rho : 1_{\CF}\Rightarrow R U$ then the functor $\CU^{X'}_X$ has a right adjoint $\CR^{X'}_X$
defined by $\CR^{X'}_X(u,S,v) = (\rho_{X'})^*\otimes (Ru,RS,Rv)\otimes (\rho_{X})_*$.  
\end{lemma}
Our span analogue of Theorem~\ref{thm:adjointModU} follows from Proposition 2.7 of \cite{Kelly2002}
and our Lemma~\ref{Local_adj_span}.
\begin{proposition}\label{spanalog} 
Suppose $U : \CF \to \CE$ is a pullback preserving functor between categories with pullbacks.
Then $\CU : \mathrm{Spn}(\CF) \to \mathrm{Spn}(\CE)$ is a pseudofunctor.
If the functor $U$ has a right adjoint $R$ then $\CU$ has a right adjoint $\CR$ in $\mathrm{Caten}$
given on hom categories by the functors $\CR^{X'}_X$ of Lemma~\ref{Local_adj_span}.  
\end{proposition}

Let $G$ be a pullback-preserving comonad on a category $\CE$ which admits pullbacks. 
We shall define a two-sided enriched category $\CG : \mathrm{Spn}(\CE)\to \mathrm{Spn}(\CE)$.
The objects are the $G$-coalgebras $(A,A\xrightarrow{\gamma} GA)$ in $\CE$ with
$(A,\gamma)_-= (A,\gamma)_+= (A,\gamma)_0 : = A$. 
The functor $\CG_{(A,\gamma)}^{(A',\gamma')} : \mathrm{Spn}(\CE)(A,A')\to \mathrm{Spn}(\CE)(A,A')$
takes the span $(u,S,v) : A \to A'$ to the composite 
$$\gamma'^*\ox (Gu,GS,Gv)\ox \gamma_* : A \to A' \ . $$ 
Note that coactions are monomorphisms so the identity span $1_A$ of $A$ is a pullback of the cospan 
$A\xra{\gamma}GA\xla{\gamma}A$; so there is a unique invertible 2-cell $\eta_{(A,\gamma)} : 1_A\Rightarrow \CG_{(A,\gamma)}^{(A,\gamma)} 1_A$ in $\mathrm{Spn}(\CE)$.
The natural transformation $\mu^{(A',\gamma')}_{(A,\gamma),(A'',\gamma'')}$ has components defined by the composites
\begin{eqnarray*}
\lefteqn{\CG_{(A',\gamma')}^{(A'',\gamma'')}(u',S',v')\ox \CG_{(A,\gamma)}^{(A',\gamma')}(u,S,v)} \\
& = & \gamma''^*\ox (Gu',GS',Gv')\ox\gamma'_*\ox \gamma'^*\ox (Gu,GS,Gv)\ox \gamma_* \\
&\Rightarrow & \gamma''^*\ox (Gu',GS',Gv')\ox (Gu,GS,Gv)\ox \gamma_* \\
&\cong & \gamma''^*\ox (G(up),GP,G(v'q))\ox \gamma_* \\
& = & \CG_{(A,\gamma)}^{(A'',\gamma'')}((u',S',v')\ox (u,S,v))
\end{eqnarray*}
where the 2-cell is a whiskered counit of the adjunction $\gamma'_*\dashv \gamma'^*$
and the isomorphism comes from the fact that $G$ preserves the pullback \eqref{spancomp}.

Indeed, $\CG$ becomes a comonad on $\mathrm{Spn}(\CE)$ as follows. The natural transformations
$$\mathrm{d}_{(A,\gamma)}^{(A',\gamma')} : \CG_{(A,\gamma)}^{(A',\gamma')}\Lra (\CG_{(A,\gamma)}^{(A',\gamma')})^2 \ \text{  and  } \ \mathrm{e}_{(A,\gamma)}^{(A',\gamma')} : \CG_{(A,\gamma)}^{(A',\gamma')}\Lra 1_{\mathrm{Spn}(\CE)_{A}^{A'}}$$ 
have component at $(u,S,v)$ induced on the limits of the rows of the diagrams \eqref{induce_d} and
\eqref{induce_e} by those commuting diagrams. The limits exist in $\CE$ since they can be constructed by iterated pullbacks.
\begin{eqnarray}\label{induce_d}
\begin{aligned}
\xymatrix{
A \ar[d]_-{1_A} \ar[r]^-{\gamma}& GA  \ar[d]_-{\delta_A} & GS \ar[l]_-{Gu} \ar[d]^-{\delta_S} \ar[r]^-{Gv} & GA'  \ar[d]^-{\delta_{A'}} & A' \ar[l]_-{\gamma'} \ar[d]^-{1_{A'}}\\
A \ar[r]^-{G\gamma \circ \gamma} & G^2A &  G^2S \ar[l]_-{G^2u} \ar[r]^-{Gv} & G^2 A' & A' \ar[l]_-{G\gamma'}}
\end{aligned}
\end{eqnarray}
\begin{eqnarray}\label{induce_e}
\begin{aligned}
\xymatrix{
A \ar[d]_-{1_A} \ar[r]^-{\gamma}& GA  \ar[d]_-{\varepsilon_A} & GS \ar[l]_-{Gu} \ar[d]^-{\varepsilon_S} \ar[r]^-{Gv} & GA'  \ar[d]^-{\varepsilon_{A'}} & A' \ar[l]_-{\gamma'} \ar[d]^-{1_{A'}}\\
A \ar[r]^-{1_A} & A &  S \ar[l]_-{u} \ar[r]^-{v} &  A' & A' \ar[l]_-{1_A}}
\end{aligned}
\end{eqnarray}

\begin{proposition}\label{pbcomonad}
For any category $\CE$ with pullbacks and any comonad $G$ on $\CE$ which preserves pullbacks, the comonad $\CG$ just constructed satisfies an equivalence $$\mathrm{Spn}(\CE)^{\CG} \simeq \mathrm{Spn}(\CE^{G})$$ in $\mathrm{Caten}$.
\end{proposition}
\begin{proof}
Note that the category $\CE^{G}$ of $G$-coalgebras has pullbacks since $\CE$ does
and $G$ preserves them. 
The displayed bicategories do have the same objects, namely, the $G$-coalgebras.
By definition, $\mathrm{Spn}(\CE)^{\CG}((A,\gamma),(A',\gamma')) = \mathrm{Spn}(\CE)(A,A')^{\CG_{(A,\gamma)}^{(A',\gamma')}}$. An object of this category is a $\CG_{(A,\gamma)}^{(A',\gamma')}$-coalgebra; that is, a span $(u,S,v) : A \to A'$ together with a span morphism
$(u,S,v)\to \gamma'^*\ox (Gu,GS,Gv)\ox \gamma_*$ satisfying two axioms for a coaction.
Such span morphisms are in bijection with morphisms $\sigma : S\to GS$ in $\CE$ such that the
diagram
\begin{eqnarray}\label{sigmaG-coalg}
\begin{aligned}
\xymatrix{
A  \ar[d]_-{\gamma} && S \ar[ll]_-{u} \ar[rr]^-{v} \ar@{.>}[d]^-{\sigma} && A' \ar[d]^-{\gamma'} \\
GA  && GS \ar[ll]^-{Gu} \ar[rr]_-{Gv}  && GA'
} 
\end{aligned}
\end{eqnarray}
commutes while the two coaction conditions translate to the two conditions for $(S,\sigma)$ to be
a $G$-coalgebra. So \eqref{sigmaG-coalg} tells us that $u$ and $v$ are $G$-coalgebra morphisms and we have a span in $\CE^G$. This extends to an isomorphism of categories $$\mathrm{Spn}(\CE)(A,A')^{\CG_{(A,\gamma)}^{(A',\gamma')}} \cong \mathrm{Spn}(\CE^G)((A,\gamma),(A',\gamma')) \ .$$
It is easy to see that composition and identities in the bicategories correspond under these isomorphisms.
\end{proof}
\begin{corollary} If the functor $U : \CF\to \CE$ of Proposition~\ref{spanalog} is comonadic in $\mathrm{Cat}$ then
$\CU : \mathrm{Spn}(\CF)\to \mathrm{Spn}(\CE)$ is comonadic in  $\mathrm{Caten}$.
\end{corollary}
\begin{example}\label{interioroperator}
Take any set $X$ and let $\CE = \mathcal{P}X$ be the ordered set of subsets of $X$.
This $\CE$ is complete; pullbacks are intersections. 
We have the locally ordered bicategory $\CS X = \mathrm{Spn}(\CE)$.     
Now let $X$ be a topological space. Write $\mathcal{O}X$ for the ordered set of open subsets of $X$. 
For our example, take $G$ to be the interior operator for the topology. 
This $G$ is a finite-limit-preserving idempotent comonad on the complete cartesian-monoidal category $\CE = \mathcal{P}X$. 
We have $\CE^{\CG} = \mathcal{O}X$.
By Proposition~\ref{pbcomonad}, the bicategory $\mathrm{Spn}(\mathcal{O}X)$ is 
comonadic over $\CS X$ in $\mathrm{Caten}$.  
\end{example}

\section{Differential comonads}\label{DiffComonads}

Let $\CV$ be a locally additive bicategory; that is, a bicategory whose homs are additive categories such that composition with a morphism, on either side, is an additive functor.

\begin{definition}
A {\em differential system $\CD$ on $\CV$} consists of a set $\mathrm{ob}\CD$ (whose elements are called {\em objects of $\CD$}), a function $(-)_0 :  \mathrm{ob}\CD\to \mathrm{ob}\CV$, 
additive functors $\CD^{D'}_D : \CV^{D'_0}_{D_0} \to \CV^{D'_0}_{D_0}$ for all objects $D, D'$ of $\CD$, and families of 2-cells 
$$\mathrm{r}_{v',v} : \CD^{D''}_D(v')\otimes v\to \CD^{D''}_D(v'\otimes v) \ \text{  and  } \ 
\mathrm{\ell}_{v',v} : v'\otimes \CD^{D'}_D(v)\to \CD^{D''}_D(v'\otimes v) \ ,$$
natural in $v\in \CV_D^{D'}, v'\in \CV_{D'}^{D''}$, satisfying conditions \eqref{ax_r}-\eqref{eq:rlcomp} (in which
brackets, associativity constraints, and unit constraints are omitted for easier reading). 
\begin{eqnarray}\label{ax_r}
\begin{aligned}
\xymatrix{
\CD^{D'''}_D(v'')\ox v'\ox v \ar[rd]_{\mathrm{r}_{v'',v'\ox v}}\ar[rr]^{\mathrm{r}_{v'',v'}\ox 1_v}   && \CD^{D'''}_D(v''\ox v')\ox v \ar[ld]^{\ \mathrm{r}_{v''\ox v',v}} \\
& \CD^{D'''}_D(v''\ox v'\ox v)  &
}
\end{aligned}
\end{eqnarray}
\begin{eqnarray}\label{ax_l}
\begin{aligned}
\xymatrix{
v''\ox v'\ox \CD^{D'}_D(v) \ar[rd]_{\mathrm{\ell}_{v''\ox v',v}}\ar[rr]^{1_{v''}\ox\mathrm{\ell}_{v',v}}   && v''\ox \CD^{D''}_D(v'\ox v) \ar[ld]^{\mathrm{\ell}_{v'',v'\ox v}} \\
& \CD^{D'''}_D(v''\ox v'\ox v)  &
}
\end{aligned}
\end{eqnarray}
\begin{eqnarray}\label{ax_rl}
\begin{aligned}
\xymatrix{
v''\ox \CD^{D''}_{D'}(v')\ox v \ar[rr]^-{\mathrm{\ell}_{v'',v'}\ox 1_v} \ar[d]_-{1_{v''}\ox \mathrm{r}_{v',v}} &&  \CD^{D'''}_{D'}(v'')\ox v'\ox v \ar[d]^-{\mathrm{r}_{v'',v'\ox v}} \\
v''\ox v' \ox \CD^{D'}_{D}(v) \ar[rr]_-{\mathrm{\ell}_{v''\ox v',v}} && \CD^{D'''}_D(v''\ox v'\ox v)}
\end{aligned}
\end{eqnarray}
\begin{eqnarray}\label{ax_unit}
\begin{aligned}
\CD^{D'}_{D}(v)\xra{1_{\CD^{D'}_{D}(v)}} \CD^{D'}_{D}(v) \
& = &   \CD^{D'}_{D}(v)\ox 1_{D_0}\xra{\mathrm{r}_{v,1_{D_0}}} \CD^{D'}_{D}(v\ox 1_{D_0}) \\ 
& = & 1_{D'_0}\ox\CD^{D'}_{D}(v)\xra{\mathrm{\ell}_{1_{D_0},v}} \CD^{D'}_{D}(1_{D'_0}\ox v)
\end{aligned}
\end{eqnarray}
\begin{equation}\label{eq:rlcomp}
\CD^{D''}_{D}(\mathrm{\ell}_{v',v})\circ  r_{v',\CD^{D'}_{D}(v)} + \CD^{D''}_{D}(\mathrm{r}_{v',v})\circ l_{\CD^{D''}_{D'}(v'),v}=0
\end{equation}
\end{definition}

\begin{proposition}\label{diff_comonad}
Suppose $\CD$ is a differential system on the locally additive bicategory $\CV$ whose
hom categories have finite direct sums. The following defines a comonad $\CG$ on $\CV$ in
$\mathrm{Caten}$. The object span of $\CG$ is that of $\CD$. The functor $\mathscr{G}_D^{D'}:\mathscr{V}_{D_0}^{D'_0}\rightarrow \mathscr{V}_{D_0}^{D'_0}$ is defined by 
$$\mathscr{G}_D^{D'} = 1_{\CV_{D_0}^{D'_0}}\oplus \CD_D^{D'} \ .$$
The 2-cell $\mu_{v',v}$ is defined by the matrix 
$\begin{bmatrix}
1 & 0 & 0 & 0 \\
0 & \mathrm{\ell}_{v',v} & \mathrm{r}_{v',v} & 0
\end{bmatrix}$ $:$ 
 \begin{eqnarray*}
 (v'\ox v)\oplus (v'\ox \CD_D^{D'}(v))\oplus (\CD_{D'}^{D''}(v')\ox v)\oplus (\CD_{D'}^{D''}(v')\ox \CD_{D}^{D'}(v)) \Rightarrow (v'\ox v)\oplus  \CD_D^{D''}(v'\ox v) \ .
\end{eqnarray*}
The 2-cell $\eta :1_{D_0}\Rightarrow 1_{D_0}\oplus \mathscr{D}_D^D(1_{D_0})$ is the first injection $\begin{bmatrix}
1 \\
0 
\end{bmatrix}$.
The 2-cell $\mathrm{d}_v$ is the matrix
$$\begin{bmatrix}
1 & 0  \\
0 & 1 \\
0 & 1 \\
0 & 0
\end{bmatrix}
 : v\oplus \CD_D^{D'}(v) \Rightarrow v\oplus \CD_D^{D'}(v) \oplus \CD_D^{D'}(v)\oplus (\CD_D^{D'})^2(v) \ .$$ 
 The 2-cell $\mathrm{e}_v$ is the first projection
$$\begin{bmatrix}
1 & 0  
\end{bmatrix}
 : v\oplus \CD_D^{D'}(v) \Rightarrow v \ .$$   
\end{proposition}
\begin{proof}
Associativity of composition $\mu$ for $\CG$ is proved by calculating two matrix products:
\begin{eqnarray*}
\begin{bmatrix}
1 & 0 & 0 & 0 \\
0 & \mathrm{\ell} & \mathrm{r} & 0
\end{bmatrix}
\begin{bmatrix}
1 & 0 & 0 & 0 & 0 & 0 & 0 & 0 \\
0 & 1\ox \mathrm{\ell} & 1\ox \mathrm{r} & 0 & 0 & 0 & 0 & 0 \\
0 & 0 & 0 & 1 & 0 & 0 & 0 & 0 \\
0 & 0 & 0 & 0 & 0 & 1\ox \mathrm{\ell} & 1\ox \mathrm{r} & 0
\end{bmatrix}
\end{eqnarray*}
\begin{eqnarray*}
\begin{bmatrix}
1 & 0 & 0 & 0 \\
0 & \mathrm{\ell} & \mathrm{r} & 0
\end{bmatrix}
\begin{bmatrix}
1 & 0 & 0 & 0 & 0 & 0 & 0 & 0 \\
0 & 1 & 0 & 0 & 0 & 0 & 0 & 0 \\
0 & 0 & \mathrm{\ell}\ox 1 & \mathrm{r}\ox 1 & 0 & 0 & 0 & 0 \\
0 & 0 & 0 & 0 &  \mathrm{\ell}\ox 1 & \mathrm{r}\ox 1 & 0 & 0
\end{bmatrix}
\end{eqnarray*}
which are equal by \eqref{ax_r}, \eqref{ax_l} and \eqref{ax_rl}.
That $\eta$ is the unit for $\mu$ is proved by calculating two matrix products of the form:
\begin{eqnarray*}
\begin{bmatrix}
1 & 0 & 0 & 0 \\
0 & \mathrm{\ell} & \mathrm{r} & 0
\end{bmatrix}
\begin{bmatrix}
1 & 0  \\
0 & 0 \\
0 & 1 \\
0 & 0 
\end{bmatrix}
\ \text{   ,   } \
\begin{bmatrix}
1 & 0 & 0 & 0 \\
0 & \mathrm{\ell} & \mathrm{r} & 0
\end{bmatrix}
\begin{bmatrix}
1 & 0  \\
0 & 1 \\
0 & 0 \\
0 & 0 
\end{bmatrix}
\end{eqnarray*}
which are both identity $2\times 2$-matrices by \eqref{ax_unit}.

The proof that $\mathrm{d}$ and $\mathrm{e}$ are enriched functors and that they satisfy
coassociativity and counital conditions for a comonad proceeds similarly using matrix
multiplication. Only the condition relating $\mathrm{d}$ and $\mu$ (see \cite{Kelly2002} diagram (2.16))
requires any of the differential system axioms, namely, condition \eqref{eq:rlcomp};
for a little more detail on this, see \cite{NikolicPhD}.
\end{proof}

A comonad arising as in Proposition~\ref{diff_comonad} is called a {\em differential comonad}.

\begin{example}\label{E}
Let $\CC$ be a braided monoidal additive category containing an object $E$ with
$$\sigma_{E,E} = -1_{E\ox E} : E\ox E \to E\ox E \ ,$$
where $\sigma_{X,Y} : X\ox Y\to Y\ox X$ is the braiding for $\CC$. 
Take $\CV$ to be the one-object bicategory with endohom $\CC$.
The following defines a differential system $\CD$ on $\CV$ (where we are writing as if $\CC$ were strict monoidal):
\begin{itemize}
\item[a.] $\mathrm{ob}\CD$ is a singleton $\{ D \}$;
\item[b.] $\CD =\CD_D^D : = E\ox - : \CC \to \CC$;
\item[c.] $\mathrm{r}_{Y,X} = 1_{E\ox Y\ox X} : E\ox Y\ox X \to E\ox Y\ox X$;
\item[d.] $\mathrm{r}_{Y,X} = \sigma_{Y,E}\ox 1_X : Y\ox E\ox X \to E\ox Y\ox X$.
\end{itemize}
All the differential system conditions are obvious except that \eqref{eq:rlcomp} translates to
the equation $\sigma_{E\ox Y,E} + \sigma_{Y,E} = 0$ which holds by the property
$\sigma_{E\ox Y,E} = (\sigma_{E,E}\ox 1_Y)\circ (1_E\ox \sigma_{Y,E})$ of the braiding and the condition $\sigma_{E,E} = -1_{E\ox E}$. 
\end{example} 

We have the following easy identification of Eilenberg-Moore coalgebra construction for a differential comonad in terms of the differential system.

\begin{proposition}\label{coalgs_diff_comonad}
The bicategory $\CV^{\CG}$ for the differential comonad $\CG$ on $\CV$ as in Proposition~\ref{diff_comonad} has the same objects as the differential system $\CD$. 
The objects of the hom category $\CV^{\CG}(D,D')$ are pairs $(x,\delta)$ where $x : D_0\to D'_0$
is a morphism and $\delta : x \Rightarrow \CD_D^{D'}(x)$ is a 2-cell of $\CV$ satisfying 
$\CD_D^{D'}(\delta)\delta = 0$. Morphisms $\theta : (x,\delta)\Rightarrow (x_1,\delta_1)$ are 2-cells 
$\theta : x \Rightarrow x_1$ in $\CV$ such that $\theta \delta = \delta_1 \theta$. 
The composite $D\xra{(x,\delta)} D'\xra{(x',\delta')} D''$ is $D\xra{(x'\ox x,\mathrm{r}_{x',x}(\delta' \ox 1_x)+\mathrm{\ell}_{x',x}(1_{x'}\ox\delta))} D'' $.
\end{proposition}

\begin{example}\label{Eip}
Consider the case of Example~\ref{E} where $\CC = \mathrm{GAb}$ is the symmetric monoidal
category of $\mathbb{Z}$-graded abelian groups. The objects are families $A = (A_n)_{n\in \mathbb{Z}}$ of abelian groups with the morphisms families of abelian group morphisms.
Elements of $A_n$ are said to be of {\em degree} $n$.
We regard abelian groups as objects by putting them in degree $0$ and putting the abelian group $0$
in all other degrees.
The tensor product (as per \cite{EilKel1966}) is defined by the convolution formula 
$$(A\ox B)_n = \sum_{p+q=n}{A_p\ox B_q} \ .$$
The unit object for this tensor product is the group $\mathbb{Z}$ of integers under addition.
The symmetry $\sigma = \sigma_{A,B} : A\ox B \to B\ox A$ is defined by
$$\sigma(a\ox b) = (-1)^{pq}b\ox a$$
for $a\in A_p$ and $b\in B_q$. The {\em suspension} of $A\in \mathrm{GAb}$ is the 
graded abelian group $\mathrm{S}A$ defined by $\mathrm{S}A_n= A_{n-1}$. 
Then $E =  \mathrm{S}\mathbb{Z}$ has the property that $\sigma_{E,E} = -1_{E\ox E}$. 
Proposition~\ref{diff_comonad} provides a monoidal comonad on $\mathrm{GAb}$
arising from this $E$ (in fact, since $E\ox A \cong \mathrm{S}\mathbb{A}$, we have $\CD \cong \mathrm{S}$
and Proposition~\ref{coalgs_diff_comonad} tells us that the monoidal category of Eilenberg-Moore
coalgebras is the category $\mathrm{DGAb}$ of chain complexes of abelian groups with the 
usual symmetric monoidal structure (see \cite{EilKel1966}). This example can be generalised in the
obvious way to replace $\mathrm{Ab}$ by any monoidal additive category with countable coproducts.      
\end{example} 

\section{Comonadicity of comonadic base change}\label{Ccbc}
Recall the base change result in \cite{Kelly2002} which appears there as Proposition 7.5 and states that right whiskering modules by a two-sided enriched category provides a normal lax functor.
\begin{theorem}\label{thm:comonadicModU}
Let $\CG$ be a comonad in $\mathrm{Caten}$ on the locally cocomplete bicategory $\mathscr{V}$ and let ${\CU}\dashv {\CR}$ be as in diagram \eqref{diag:catenAdj}. Then the lax functor 
\begin{align*}
\mathscr{N}:= \mathrm{Moden}(\mathscr{X},\mathscr{V}^\mathscr{G})
 \xrightarrow{\widetilde{\mathscr{U}}:=\mathrm{Moden}(\mathscr{X},\mathscr{U})}
 \mathscr{M:=}\mathrm{Moden}(\mathscr{X},\mathscr{V}) \ ,
\end{align*}
given by right whiskering with ${\CU}$, is comonadic in\footnote{Note that $\mathscr{N}$ and $\mathscr{M}$ may have a large set (proper class) of objects.} $\mathrm{CATEN}$. 
\end{theorem}
\begin{proof}
As in the proof of Theorem~\ref{thm:adjointModU}, we only look at $\mathscr{N}=\mathscr{V}^\mathscr{G}\text{-Mod}$, and $\mathscr{M}=\mathscr{V}\text{-Mod}$.
By Theorem~\ref{thm:adjointModU}, we have a right adjoint $\widetilde{\mathscr{R}}$ to $\widetilde{\mathscr{U}}$. Moreover, $\widetilde{\mathscr{U}}$ satisfies the other two conditions of Proposition \ref{prop:comonadic} as stated in Lemma \ref{lem:whisk}. This is all we need to conclude that $\widetilde{\mathscr{U}}$ is comonadic.
\end{proof}
\begin{remark}\label{monoidcasecomonadic}
Theorem~\ref{thm:comonadicModU} restricts to the one-object case.
In the situation of Remark~\ref{monoidcaseadjoint}, if $\CU : \CD\to \CC$ is comonadic in $\mathrm{Cat}$
then $\widetilde{\CU}$ is comonadic in $\mathrm{Caten}$.
\end{remark}

\begin{example}\label{Spnenrich}
Recall from Example 2.1 of \cite{88} that each Grothendieck fibration $P : \CF \to \CE$ 
(in the sense of \cite{GroSGA}), over the category $\CE$ with pullbacks,
gives rise to a $\mathrm{Spn}(\CE)$-enriched category $\CA$ when the fibration is locally small (or ``has small homs'') in the two-sided sense; see \cite{Ben1975, 32}. 
The objects of $\CA$ over $X$ are the objects $A$ of $\CF$ with $PA=X$.
The span $\CA(A,B) : X\to Y$, where $PA=X$, $PB=Y$, is defined universally (using local smallness) by a natural bijection
$$\mathrm{Spn}(\CE)(X,Y)((u,S,v),\CA(A,B)) \cong \CF_S(u^{\star}A,v^{\star}B)$$ where $\CF_S$ is the fibre
of $P$ over $S$ and $u^{\star}A\to A$, $v^{\star}B\to B$ are cartesian morphisms over $u : S\to X$, $v : S \to Y$,
respectively. It follows from Theorem~\ref{thm:comonadicModU} that the $\mathrm{Spn}(\CE^G)$-enriched category arising from a locally small fibration over $\CE^G$ is a coalgebra for a base-change comonad on $\mathrm{Spn}(\CE)\text{-}\mathrm{Mod}$. 
\end{example}

\begin{example} Let $\Delta$ denote the topologist's simplex category: objects are finite non-empty linearly ordered sets
$[n] = \{0,1,\dots,n\}$, for $n\ge 0$, and morphisms are order-preserving functions. Let $\mathbb{N}$ be the discrete
category of natural numbers. Restriction along the bijective-on-objects functor $[-] : \mathbb{N}\to \Delta^{\mathrm{op}}$
taking $n$ to $[n]$ is a comonadic functor $U : [\Delta^{\mathrm{op}}, \mathrm{Set}] \to [\mathbb{N}, \mathrm{Set}]$ since it is conservative and has adjoints on both sides given by left and right Kan extension.
Using the formula for right Kan extension, we see that the right adjoint $R$ for $U$ is given by
\begin{eqnarray*}
R(X)[n] = \prod_{\xi\in \Delta([u],[n])} X_u \ \text{  and  } \ R(X)([m]\xra{\theta}[n])(x_{\xi})_{\xi}= (x_{\theta\zeta})_{\zeta} \ .
\end{eqnarray*}
Since $U$ preserves products, it is strong monoidal for the cartesian monoidal structures on its domain and codomain categories. 
Of course, $[\Delta^{\mathrm{op}}, \mathrm{Set}]$ is the category of simplicial sets; categories enriched therein are of interest in algebraic topology and higher category theory (for example, see \cite{CorPor}).  
We deduce from Theorem~\ref{thm:comonadicModU} that the bicategory of simplicial-set-enriched categories and modules between them is comonadic in $\mathrm{CATEN}$ over the bicategory of $\mathbb{N}$-graded-set-enriched categories and modules between them.  
\end{example}

\begin{example} Let $\CV$ be a locally presentable symmetric monoidal category and let $\mathrm{Comon}\CV$ denote the
category of comonoids and comonoid morphisms in $\CV$. 
Then $\mathrm{Comon}\CV$ is locally-presentable symmetric monoidal
with the forgetful functor $U : \mathrm{Comon}\CV \to \CV$ comonadic and symmetric strong monoidal; see \cite{Porst2008}.   
We deduce from Theorem~\ref{thm:comonadicModU} that the bicategory of $\mathrm{Comon}\CV$-enriched categories and modules between them is comonadic in $\mathrm{CATEN}$ over the bicategory $\CV\text{-}\mathrm{Mod}$ of $\CV$-enriched categories and modules between them.  
As shown in \cite{Vasil2012}, an example of a $\mathrm{Comon}\CV$-enriched category  
is the category $\mathrm{Mon}\CV$ of monoids and monoid morphisms in $\CV$; 
the homs are given by the measuring comonoid construction; see \cite{AneJoy2013, HyLFVa2017} for consequential applications and developments. 
\end{example}

\begin{example}\label{threecomonads} 
Consider the diagram~\eqref{diag:adjs} of monoidal additive categories and additive functors.
\begin{equation}\label{diag:adjs}
\begin{tikzpicture}[scale=2, every node/.style={scale=1},baseline=(current  bounding  box.center)]
\node (d) at (-2,0) {$\mathrm{DGAb}$};
\node (g) at (0,0) {$\mathrm{GAb}$};
\node (a) at (2,0) {$\mathrm{Ab}$};
\node (sym1) at (-1,0.2) {$\bot$};
\node (sym2) at (-1,-0.2) {$\bot$};
\node (sym3) at (1,-0.2) {$\bot$};
\path[,->,font=\scriptsize,>=angle 90,bend right]
(g) edge node[above] {$L$} (d);
\path[,->,font=\scriptsize,>=angle 90]
(d) edge node[below left] {$U$} (g);
\path[->,font=\scriptsize,>=angle 90,bend left]
(g)edge node[below] {$R$} (d);
\path[,->,font=\scriptsize,>=angle 90,bend left]
(a) edge node[below] {$C$} (g);
\path[,->,font=\scriptsize,>=angle 90]
(g) edge node[above] {$\Sigma$} (a);
%
\end{tikzpicture}
\end{equation}
Some of the notation was explained in Example~\ref{Eip}. Of course, $\mathrm{Ab}$ is the monoidal
category of abelian groups so that $\mathrm{Ab}$-enriched categories are additive categories (no direct
sums required). Categories enriched in $\mathrm{GAb}$ are graded categories or G-categories while
categories enriched in $\mathrm{DGAb}$ are differential graded categories or DG-categories. 
These were motivating examples of enriched categories for Eilenberg-Kelly \cite{EilKel1966}.
The functor $U$ forgets the differentials in the chain complexes and has adjoints $L\dashv U\dashv R$
given by
\begin{align}
L(C)_n =C_{n+1}\oplus C_n \ , \
R(C)_n =C_{n}\oplus C_{n-1} \ , \
d =\left[\begin{array}{cc}
    0 & 1	\\
    0 & 0 
    \end{array}\right]\,.
\end{align} 
The functor $\Sigma$ takes $A=(A_n)_{n\in \mathbb{Z}}$ to the coproduct $\Sigma A = \sum_{n\in \mathbb{Z}}A_n$ while its right adjoint $C$ takes each abelian group $X$ to the graded abelian group $CX$ with $X$
in all degrees. We already mentioned in Example~\ref{Eip} that $U$ is comonadic but so too are $\Sigma$
and $\Sigma\circ U$. The comonad $\Sigma \circ C$ generated by $\Sigma \dashv C$ is given by tensoring
with the Hopf ring $\mathbb{Z}[x,x^{-1}]$ of Laurent polynomials with integer coefficients. 
The comonad $\Sigma \circ U \circ R\circ C$ generated by $\Sigma \circ U \dashv R\circ C$ is given by tensoring with the Hopf ring which is the object of study by Pareigis in \cite{Pareigis1981}. 
So we obtain three applications of Theorem~\ref{thm:comonadicModU}. To be explicit, the
pseudofunctors 
\begin{eqnarray*}\label{3comonadics}
\mathrm{GAb}\text{-}\mathrm{Mod} \xra{\widetilde{\Sigma}}\mathrm{Ab}\text{-}\mathrm{Mod} \ , \
\mathrm{DGAb}\text{-}\mathrm{Mod} \xra{\widetilde{U}}\mathrm{GAb}\text{-}\mathrm{Mod} \ , \ 
\mathrm{DGAb}\text{-}\mathrm{Mod} \xra{\widetilde{U\circ \Sigma}}\mathrm{Ab}\text{-}\mathrm{Mod}
\end{eqnarray*}
are all comonadic in $\mathrm{CATEN}$.
\end{example}
\begin{example}\label{interioroperatorbasechange}
Refer back to the situation of Example~\ref{interioroperator}.
For $X$ just a set, we will give an interpretation of the bicategory $\CS X\text{-}\mathrm{Mod}$
in terms of families of ordered sets over the discretely ordered $X$. Let $\mathrm{Idl}/X$ denote the 2-category of
ordered objects and two-sided order ideals (in the sense of \cite{27}) in the topos $\mathrm{Set}/X$ of sets over $X$.
The objects are ordered sets $A$ equipped with a function $p : A\to X$ such that $a\le a'$ implies $p(a)=p(a')$.
The morphisms $M : (A,p) \to (B,q)$ are relations $M\subseteq B\times A$ such that 
\begin{itemize}
\item[(i)] $bMa$ implies $p(a)=q(b)$ and
\item[(ii)] $bMa, b'\le b, a\le a'$ imply $b'Ma'$
\end{itemize}
where $bMa$ means $(b,a)\in M$.  
The 2-cells are inclusions. Morphisms are composed as relations. 

An invertible 2-functor
 \begin{eqnarray*}
\Gamma : \mathrm{Idl}/X \lra \CS X\text{-}\mathrm{Mod} 
\end{eqnarray*}
is defined as follows. The $\CS X$-category $\Gamma (A,p)$ has objects $(U,s)$ where $s$ is a section of
$p$ over $U\subseteq X$; that is, $s : U\to A$ is a function such that $p(s(x))=x$ for all $x\in U$.
Put $\Gamma (A,p)((U,s),(U',s')) = \{x \in X : s(x)=s'(x) \}\subseteq U\cap U'$ as a span from $U$ to $U'$ in $\mathcal{P}X$. The $\CS X$-category composition and identity inclusions are obvious. For an ideal $M : (A,p) \to (B,q)$, the
module $\Gamma M : \Gamma (A,p) \to \Gamma (B,q)$ is defined by 
$\Gamma M ((V,t),(U,s)) = \{x \in X : t(x)Ms(x) \}$. The inverse for $\Gamma$ takes the $\CS X$-category $\mathbf{A}$
to the set of objects $\mathbf{a}$ over singleton subsets $\mathbf{a}_+$ of $X$ ordered by $\mathbf{a}\le \mathbf{a}'$ if and only
if $\mathbf{a}$ and $\mathbf{a}'$ are over the same $\{ x\}$ and $x\in \mathbf{A}(\mathbf{a},\mathbf{a}')$.      

Again let $X$ be a topological space with $G$ to be the interior operator for the topology. 
By Theorem~\ref{thm:comonadicModU}, the bicategory $\mathrm{Spn}(\mathcal{O}X)\text{-}\mathrm{Mod}$ is 
comonadic over $\mathrm{Idl}/X$ in $\mathrm{Caten}$.  
Recall that Walters \cite{Walters1981} showed that sheaves on the space $X$ are equivalent to symmetric Cauchy
complete categories enriched in this bicategory $\mathrm{Spn}(\mathcal{O}X)$.
Hence, the bicategory of left adjoint morphisms in $\mathrm{Spn}(\mathcal{O}X)\text{-}\mathrm{Mod}$ is of interest. 
\end{example}

\section{Fusion operators and Hopfness for comonads in $\mathrm{Caten}$}\label{sec:Fusion}

Fusion operators in monoidal categories were studied in \cite{Street1998} along with the example of
those coming from bimonoids. A proof that a bimonoid has an antipode if and only if the fusion operator
is invertible can be found in \cite{Booker2011} Appendix.
In \cite{Bruguieres2011}, an opmonoidal monad on a monoidal category was called Hopf when a
suitable fusion morphism was invertible. 
The dual notion of Hopf monoidal comonad was studied in a general context in \cite{Chikhladze2010}.   
Here we identify Hopf comonads in $\text{Caten}$.

\begin{definition}
For a comonad $\mathscr{G}$ on $\CV$ in $\mathrm{Caten}$, the {\em left fusion operator} at $v'\in \mathscr{V}(G'_0,G''_0)$ and $(v,\gamma_v)\in \mathscr{V}^\mathscr{G}(G,G')$ is
the composite 2-cell
\begin{equation*}
 \mathrm{v}_{v',v}^\mathrm{l}:\mathscr{G}_{G'}^{G''}v'\otimes v
 \xrightarrow{1\otimes \gamma_v} 
 \mathscr{G}_{G'}^{G''} v'\otimes \mathscr{G}_{G}^{G'}v
 \xrightarrow{(\mu_{G,G''}^{G'})_{v',v}}
 \mathscr{G}_{G}^{G''}(v'\otimes v) \ .
\end{equation*}
Dually, in the sense of \cite{Kelly2002} Section 2.9, the {\em right fusion operator} at $v'\in \mathscr{V}(G'_0,G''_0)$ and $(v,\gamma_v)\in \mathscr{V}^\mathscr{G}(G,G')$ is
the composite 2-cell
\begin{equation*}
 \mathrm{v}_{v',v}^\mathrm{r}:
 v'\otimes \mathscr{G}_{G}^{G'}v
 \xrightarrow{\gamma_{v'}\otimes 1} 
 \mathscr{G}_{G'}^{G''} v'\otimes \mathscr{G}_{G}^{G'}v
 \xrightarrow{(\mu_{G,G''}^{G'})_{v',v}}
 \mathscr{G}_{G}^{G''}(v'\otimes v) \ .
\end{equation*}
The comonad $\CG$ is called {\em left (right) Hopf} when the left (right) fusion operators are all
invertible.
\end{definition}

Since we will use left fusion operators mostly, we simply write $\mathrm{v}_{v',v}$ for $\mathrm{v}_{v',v}^\mathrm{l}$.

\begin{proposition}
The inverse fusion maps are $\mathscr{G}$-compatible in the first variable
\begin{equation*}
\begin{tikzcd}
\mathscr{G}(v'\otimes v) 
		  \arrow[rightarrow]{dr}{\mathrm{v}_{v',v}^{-1}}& \\
v'\otimes v \arrow[leftarrow]{u}{\mathrm{e}_{v'\otimes v}}
	&	 \mathscr{G} v' \otimes v 	
		 	  \arrow[rightarrow]{l}{\mathrm{e}_{v'}\otimes 1}
\end{tikzcd}
\begin{tikzcd}
\mathscr{G}(v'\otimes v) 
		  \arrow[rightarrow]{r}{\mathrm{v}_{v',v}^{-1}}&
		 \mathscr{G} v' \otimes v 	
		 	  \arrow[rightarrow]{r}{\mathrm{d}_{v'}\otimes 1}
& \mathscr{G}^2 v' \otimes v 
 \arrow[leftarrow]{d}{\mathrm{v}_{\mathscr{G}v',v}^{-1}}
\\
\mathscr{G}^2(v'\otimes v)
\arrow[leftarrow]{u}{\mathrm{d}_{v'\otimes v}}
\arrow[rightarrow]{rr}{\mathscr{G}\mathrm{v}_{v',v}^{-1}}
& 
& \mathscr{G}(\mathscr{G} v' \otimes v )
\end{tikzcd}
\end{equation*}
as well as compatible with any coalgebra structure existing on $v'$, in the sense that
\begin{equation*}
\begin{tikzcd}
\mathscr{G}(v'\otimes v) 
	\arrow[rightarrow]{dr}{\mathrm{v}_{v',v}^{-1}}
&
\\
v'\otimes v
	\arrow[rightarrow]{u}{\gamma_{v'\otimes v}}
& \mathscr{G} v' \otimes v 	\;.
	\arrow[leftarrow]{l}{\gamma_{v'}\otimes 1}
\end{tikzcd}
\end{equation*}
\end{proposition}
\begin{proof}
Refer to the commuting diagrams \eqref{me1}, \eqref{me2}, \eqref{me3}. 
\end{proof}
\begin{equation}\label{me1}
\begin{tikzcd}
\mathscr{G}(v'\otimes v) 
		  \arrow[leftarrow]{r}{\mu_{v',v}}& 
\mathscr{G}v'\otimes \mathscr{G}v 
\arrow[leftarrow]{d}{1\otimes\gamma_v}
\arrow[rightarrow]{dl}{\mathrm{e}_{v'}\otimes\mathrm{e}_{v}}
\\
v'\otimes v \arrow[leftarrow]{u}{\mathrm{e}_{v'\otimes v}}
	&	 \mathscr{G} v' \otimes v 	
		 	  \arrow[rightarrow]{l}{\mathrm{e}_{v'}\otimes 1}
\end{tikzcd}
\end{equation}
\begin{equation}\label{me3}
\begin{tikzcd}
\mathscr{G}(v'\otimes v) 
	\arrow[leftarrow]{r}{\mu_{v',v}}
& \mathscr{G}v'\otimes \mathscr{G}v 
	\arrow[leftarrow]{d}{1\otimes\gamma_v}
	\arrow[leftarrow]{dl}{\gamma_{v'}\otimes\gamma_{v}}
\\
v'\otimes v
	\arrow[rightarrow]{u}{\gamma_{v'\otimes v}}
& \mathscr{G} v' \otimes v 	
	\arrow[leftarrow]{l}{\gamma_{v'}\otimes 1}
\end{tikzcd}
\end{equation}
\begin{equation}\label{me2}
\begin{tikzcd}
\, &\mathscr{G} v' \otimes v 
	\arrow[rightarrow]{dr}{\mathrm{d}\otimes 1}
	\arrow[rightarrow]{d}{1\otimes \gamma}
	\arrow[rightarrow,swap]{ddr}{\mathrm{d}\otimes\gamma}
&
\\
\mathscr{G}(v'\otimes v) 
		  \arrow[leftarrow]{r}{\mu}&
		 \mathscr{G} v' \otimes \mathscr{G}v 	
& \mathscr{G}^2 v' \otimes v 
	\arrow[rightarrow]{d}{1\otimes\gamma}
\\
& \mathscr{G}^2 v' \otimes \mathscr{G}^2v 
	\arrow[leftarrow]{u}{\mathrm{d}\otimes\mathrm{d}}
& \mathscr{G}^2 v' \otimes \mathscr{G}v 
	\arrow[rightarrow]{d}{\mu}
	\arrow[rightarrow]{l}{1\otimes \mathscr{G}\gamma}
\\
\mathscr{G}^2(v'\otimes v)
	\arrow[leftarrow]{uu}{\mathrm{d}}
& \mathscr{G}(\mathscr{G} v' \otimes \mathscr{G}v )
	\arrow[rightarrow]{l}{\mathscr{G}\mu}
	\arrow[leftarrow]{u}{\mu}
& \mathscr{G}(\mathscr{G} v' \otimes v )
	\arrow[rightarrow]{l}{\mathscr{G}(1\otimes\gamma)}
\end{tikzcd}
\end{equation}
\begin{example}
Comonads obtained by tensoring with a Hopf bimonoid in a braided monoidal category are all Hopf.
In particular, the three comonads in Example~\ref{threecomonads}
are Hopf.
\end{example}
\begin{example}\label{Hopfness_of_interior_op_ex}
The comonad $\CG$ on $\mathrm{Spn}(\mathcal{P}X)$ coming from an interior operator (as in Example~\ref{interioroperator}) is Hopf.
\end{example}
\begin{remark} We refer back to Example~\ref{comonoidalenrichedcats}. 
The notion of Hopf algebroid defined in Section 8 of \cite{60} is designed for modules
for a comonoidal $\CC$-category $\CA$; those modules are $\CC$-functors $M: \CA\to \CC$.
The monoidal structure on modules defined by the comonoidal structure on $\CA$ is
pointwise and the Hopfness implies that the closed structure is particularly simple.
However, the Hopf property in the present paper is designed for comodules
for a comonoidal $\CC$-category $\CA$. The two notions are different. 
\end{remark}

\section{Extension creation}\label{sec:extCreation}

In this section we show that Hopf comonadic pseudofunctors in $\mathrm{Caten}$ create left extensions.
This generalises results in \cite{Bruguieres2011, Chikhladze2010}. 

Recall (see \cite{Street1972a} for example) that a diagram
\begin{equation}\label{lKext}
\begin{aligned}
\xymatrix{
U \ar[rd]_{u}^(0.5){\phantom{a}}="1" \ar[rr]^{v}  && V \ar[ld]^{h}_(0.5){\phantom{a}}="2" \ar@{=>}"1";"2"^-{\kappa}
\\
& C 
}
\end{aligned}
\end{equation}
in a bicategory $\CW$ is said to exhibit $h$ as a {\em left (Kan) extension} of $u$ along $v$ when
for each 2-cell $\sigma : u \Rightarrow w\otimes v$ there exists a unique 2-cell 
$\tau : h \Rightarrow w$ whose pasted composite with $\kappa$ is $\sigma$.
Should it exist, we write $\mathrm{lan}(v,u)$ for such an $h$.

A pseudofunctor $\CH : \CW\to \CV$ between bicategories is said to {\em create left extensions}
when, given a span $C\xleftarrow{u} U\xrightarrow{v}V$ in $\CW$ for which a left extension of
$\CH u$ along $\CH v$ exists in $\CV$, there exists a left extension of $u$ along $v$ in $\CW$
which is taken by $\CH$ to a left extension of $\CH u$ along $\CH v$. 

\begin{theorem}\label{thm:createKan}
If the comonad $\mathscr{G}$ in $\mathrm{Caten}$ is left Hopf, then the forgetful pseudofunctor $\mathscr{U}:\mathscr{V}^\mathscr{G}\rightarrow \mathscr{V}$ creates left extensions.
\end{theorem}
\begin{proof}
Consider a span $G''\xleftarrow{(u,\gamma_u)}G \xrightarrow{(v,\gamma_v)} G'$ of coalgebras for which the underlying span $G''\xleftarrow{u}G \xrightarrow{v} G'$ has a left extension $k=\text{lan}(v, u)$ in $\CV$ exhibited by a 2-cell $\kappa : u \Rightarrow k\otimes v$.
The universal property of the left extension is that the function taking 2-cells $\bar \phi:k\Rightarrow l$
to 2-cells $\phi:u\Rightarrow l\otimes v$, defined by $\phi=(\bar{\phi} \otimes 1) \kappa$, is a bijection. 
In particular, there is a map $\gamma_k:k\rightarrow \mathscr{G}k$ corresponding to $u\xrightarrow{\gamma_u}\mathscr{G}u\xrightarrow{\mathscr{G}\kappa}\mathscr{G}(k\otimes v)\xrightarrow{\mathrm{v}_{k,v}^{-1}}\mathscr{G}k\otimes v$ such that the diagram below commutes.
\begin{equation}\label{eq:gammaKanprop}
\begin{tikzcd}
u \arrow[rightarrow]{r}{\gamma_u} &\mathscr{G}u 
  \arrow[rightarrow]{r}{\mathscr{G}\kappa} 
  &  \mathscr{G}(k\otimes v) 
		  \arrow[rightarrow]{d}{\mathrm{v}_{k,v}^{-1}}
	  \\
k\otimes v \arrow[leftarrow]{u}{\kappa}
	\arrow[rightarrow]{rr}{\gamma_k\otimes 1}
 &  & \mathscr{G}k\otimes v \\
\end{tikzcd}
\end{equation}
The obtained arrow $\gamma_k$ defines a coalgebra structure on $k$, where the compatibility with $\mathrm{d}$ and $\mathrm{e}$ follows from the following two commutative diagrams.
\begin{equation*}
\begin{tikzcd}
\, & u
	\arrow[rightarrow,swap]{ldd}{\kappa}
	\arrow[rightarrow]{dd}{\gamma}
	\arrow[rightarrow]{rd}{\gamma}
	\arrow[rightarrow]{rrrd}{\kappa}
&&&
\\
& 
& \mathscr{G} u 	
	\arrow[rightarrow]{ddr}{\mathscr{G}\kappa}
	\arrow[rightarrow,swap]{d}{\mathscr{G}\gamma}
&& k\otimes v
	\arrow[rightarrow]{ddd}{\gamma\otimes 1}
\\
k\otimes v
	\arrow[rightarrow,swap]{ddr}{\gamma\otimes 1}
& \mathscr{G} u 	
	\arrow[rightarrow]{d}{\mathscr{G}\kappa}
	\arrow[rightarrow]{r}{\mathrm{d}}
& \mathscr{G}^2 u
	\arrow[rightarrow,swap]{d}{\mathscr{G}^2\kappa}
&&
\\
& \mathscr{G}(k\otimes v)
	\arrow[rightarrow]{d}{\mathrm{v}^{-1}}
	\arrow[rightarrow]{r}{\mathrm{d}}
& \mathscr{G}^2(k\otimes v)
	\arrow[rightarrow,swap]{d}{\mathscr{G}\mathrm{v}^{-1}}
& \mathscr{G}(k\otimes v)
	\arrow[rightarrow]{dr}{\mathrm{v}^{-1}}
	\arrow[rightarrow]{dl}{\mathscr{G}(\gamma \otimes 1)}
&
\\
& \mathscr{G}k\otimes v
	\arrow[rightarrow,swap]{dr}{\mathrm{d}\otimes 1}
& \mathscr{G}(\mathscr{G}k\otimes v)
	\arrow[rightarrow]{d}{\mathrm{v}^{-1}}
&
& \mathscr{G}k\otimes v
	\arrow[rightarrow]{dll}{\mathscr{G}\gamma\otimes 1}
\\
&
& \mathscr{G}^2 k\otimes v
&&
\end{tikzcd}
\end{equation*}
\begin{equation*}
\begin{tikzcd}
u
	\arrow[rightarrow]{rrr}{\kappa}
	\arrow[rightarrow]{dr}{1}
	\arrow[rightarrow,swap]{rdd}{\gamma}
	\arrow[rightarrow,swap]{ddd}{\kappa}
&&
& k\otimes v
\\
& u
	\arrow[rightarrow]{urr}{\kappa}
& 
&
\\
& \mathscr{G} u 	
	\arrow[rightarrow,swap]{u}{\mathrm{e}}
	\arrow[rightarrow]{r}{\mathscr{G}\kappa}
& \mathscr{G}(k\otimes v)
	\arrow[rightarrow,swap]{uur}{\mathrm{e}}
	\arrow[rightarrow,swap]{dr}{\mathrm{v}^{-1}}
&
\\
k\otimes v
	\arrow[rightarrow,swap]{rrr}{\gamma\otimes 1}
&
& 
& \mathscr{G}k\otimes v
	\arrow[rightarrow,swap]{uuu}{\mathrm{e}\otimes 1}
\end{tikzcd}
\end{equation*}
The 2-cell $\kappa$ is a coalgebra morphism as seen by substituting $\mathrm{v}^{-1}$ in (\ref{eq:gammaKanprop}).

To see that $\kappa$ exhibits $(k,\gamma_k)$ as a left extension of $(u,\gamma_u)$ along $(v,\gamma_v)$, consider a coalgebra $(l,\gamma_l):G'\rightarrow G''$, and a coalgebra morphism $\phi:u\Rightarrow l\otimes v$. In $\mathscr{V}$, the left extension universal property gives $\bar\phi:k\rightarrow l$. Using the commuting diagram 
\begin{equation*}
\begin{tikzcd}
\, & u
	\arrow[rightarrow,swap]{ld}{\kappa}
	\arrow[rightarrow]{d}{\gamma}
	\arrow[rightarrow,swap]{rrd}{\phi}
	\arrow[rightarrow]{r}{\kappa}
& k\otimes v 	
	\arrow[rightarrow]{rd}{\bar{\phi}\otimes 1}
&& 
\\
k\otimes v
	\arrow[rightarrow,swap]{ddr}{\gamma\otimes 1}
& \mathscr{G} u 	
	\arrow[rightarrow]{d}{\mathscr{G}\kappa}
	\arrow[rightarrow]{rd}{\mathscr{G}\phi}
& 
& l\otimes v
	\arrow[rightarrow]{ld}{\gamma}
	\arrow[rightarrow]{ldd}{\gamma\otimes 1}
&
\\
& \mathscr{G}(k\otimes v)
	\arrow[rightarrow]{d}{\mathrm{v}^{-1}}
	\arrow[rightarrow,swap]{r}{\mathscr{G}(\bar{\phi}\otimes 1)}
& \mathscr{G}(l\otimes v)
	\arrow[rightarrow,swap]{d}{\mathrm{v}^{-1}}
& 
&
\\
& \mathscr{G}k\otimes v
	\arrow[rightarrow,swap]{r}{\mathscr{G}\bar{\phi}\otimes 1}
& \mathscr{G} l\otimes v
&
& 
\end{tikzcd}
\end{equation*}
we indeed see that $\bar\phi$ is a coalgebra morphism.
\end{proof}
\begin{remark}
Examination of the above proof allows us to obtain a more localised result.
To obtain that the left extension of $(u,\gamma_u)$ along $(v,\gamma_v)$ is
created we do not need the global invertibility of the fusion operators. 
We only need that the function
$$\CV(u,\mathrm{v}_{v',v}) : \CV(u,\CG v'\otimes v)\to \CV(u, \CG (v'\otimes v))$$
should be injective for all $v'\in \mathscr{V}(G'_0,G''_0)$ and surjective for $v'=k$, while the function 
$$\CV(u,\CG\mathrm{v}_{k,v}) : \CV(u,\CG(\CG k\otimes v))\to \CV(u, \CG\CG (k\otimes v))$$
should be injective.
\end{remark}
\begin{remark}
A dual of Theorem~\ref{thm:createKan} is that right Hopfness of $\mathscr{G}$ in $\mathrm{Caten}$ implies $\mathscr{U}:\mathscr{V}^\mathscr{G}\rightarrow \mathscr{V}$ creates left liftings.
\end{remark}

Recall that a morphism $m$ having a right adjoint in a bicategory is equivalent to the existence of a left extension $\text{lan}(m, 1_M)$ which is respected by $m$; that is, $m\circ\text{lan}(m, 1_M)\cong \text{lan}(m, m)$.
\begin{corollary}
With a Hopf-comonadic $\mathscr{U}:\mathscr{N}\rightarrow\mathscr{M}$, a morphism $n\in\mathscr{N}(N,N')$ has a right adjoint if and only if $\mathscr{U}n$ does.
\end{corollary}
\begin{proof}
Being a pseudofunctor, $\mathscr{U}$ preserves adjoints.

The other way around, assume $\CU n$ has a right adjoint, so both $\text{lan}(\CU n, 1_{\CU N})$ and $\text{lan}(\CU n, \CU n)$ exist. From the previous theorem, $\text{lan}(n, 1_N)$ exists and $n\circ\text{lan}(n, 1_N)$ is taken to $\CU (n\circ\text{lan}(n, 1_N))\cong \CU n\circ\text{lan}(\CU n, 1_{\CU N})\cong \text{lan}(\CU n, \CU n)$ which creates $\text{lan}(n, n)$.
\end{proof}

\section{Hopfness of comonadic base change}\label{Hbc}

We now show that Hopfness of the pseudofunctor inducing the base change passes to the base change itself.

\begin{theorem}\label{thm:HopfnessModU}
In the situation of Theorem~\ref{thm:comonadicModU}, let $\widetilde{\mathscr{R}}$ be the right adjoint of $\widetilde{\mathscr{U}}$.
If $\mathscr{R}$ preserves local colimits, and $\mathscr{G}$ is left Hopf, then the induced comonad $\widetilde{\mathscr{G}}:=\widetilde{\mathscr{U}}\circ\widetilde{\mathscr{R}}$ is right Hopf.
\end{theorem}
\begin{proof}
Again we consider the case when $\mathscr{X}$ is the terminal bicategory. 
Recall that we are looking at $\mathscr{N}=\mathscr{V}^\mathscr{G}\text{-Mod}$, 
and $\mathscr{M}=\mathscr{V}\text{-Mod}$.

At modules $M\in \mathscr{M}(\mathscr{U}\circ\mathscr{A},\mathscr{U}\circ\mathscr{B})$ and $N\in \mathscr{N}(\mathscr{B},\mathscr{C})$, the right fusion operator  $\mathrm{v}_{N,M}^\mathrm{r}$ for $\widetilde{\mathscr{G}}$ is 
given by the right column of the following diagram.
\begin{equation*}
\hspace{-0.5cm}
\begin{tikzpicture}[scale=1.5, every node/.style={scale=1},baseline=(current  bounding  box.center)]
\node (C1) at (-3.5,2.1) {
$\sum \mathscr{R} M_{B'}^A\otimes\mathscr{B}_B^{B'}\otimes N_{C}^B$};
\node (C2) at (0,2.1) {
$\sum \mathscr{R} M_{B}^A\otimes N_{C}^B$};
\node (C3) at (2.7,2.1) {
$(N \circ_{\mathscr{B}} \mathscr{R}M)_{C}^A$};
\node (A1) at (-3.5,0.7) {
$\sum\mathscr{R}M_{B'}^A\otimes\mathscr{B}_B^{B'}\otimes\mathscr{R}\mathscr{U}N_{C}^B$};
\node (A2) at (0,0.7) {
$\sum\mathscr{R}M_{B}^A\otimes\mathscr{R}\mathscr{U}N_{C}^B$};
\node (A3) at (2.7,0.7) {
$(\mathscr{R}\mathscr{U}N\circ_{\mathscr{B}}\mathscr{R}M)_{C}^A$};
\node (B1) at (-3.5,-0.7) {
$\sum \mathscr{R}(M_{B'}^A\otimes\mathscr{U}\mathscr{B}_B^{B'}\otimes\mathscr{U} N_{C}^B)$};
\node (B2) at (0,-0.7) {
$\sum \mathscr{R}(M_{B}^A\otimes \mathscr{U}N_{C}^B)$};
\node (B3) at (2.7,-0.7) {
$\mathscr{R}(\mathscr{U}N\circ_{\mathscr{U}\mathscr{B}}M)_{C}^A$};-
\path[transform canvas={yshift=1mm},->,font=\scriptsize,>=angle 90]
(C1) edge node[above] {$\widetilde{\mathscr{R}}\rho\otimes 1$} (C2);
\path[transform canvas={yshift=-1mm},->,font=\scriptsize,>=angle 90]
(C1)edge node[below] {$1\otimes \lambda$} (C2);
\path[->,font=\scriptsize,>=angle 90]
(C2) edge node[above] {$\text{coeq}$} (C3);
\path[transform canvas={yshift=1mm},->,font=\scriptsize,>=angle 90]
(A1) edge node[above] {$\widetilde{\mathscr{R}}\rho\otimes 1$} (A2);
\path[transform canvas={yshift=-1mm},->,font=\scriptsize,>=angle 90]
(A1)edge node[below] {$1\otimes \widetilde{\mathscr{R}}\mathscr{U}\lambda$} (A2);
\path[->,font=\scriptsize,>=angle 90]
(A2) edge node[above] {$\text{coeq}$} (A3);
\path[transform canvas={yshift=1mm},->,font=\scriptsize,>=angle 90]
(B1) edge node[above] {$\mathscr{R}(\rho\otimes 1)$} (B2);
\path[transform canvas={yshift=-1mm},->,font=\scriptsize,>=angle 90]
(B1) edge node[below] {$\mathscr{R}(1\otimes\mathscr{U}\lambda)$} (B2);
\path[->,font=\scriptsize,>=angle 90]
(B2) edge node[below] {$\mathscr{R}(\text{coeq})$}
			node[above] {$\text{coeq}$} (B3);
\path[transform canvas={xshift=15mm},->,font=\scriptsize,>=angle 90]
(C1) edge node[left] {$\sum 1\otimes 1\otimes \mathrm{g}$} (A1);
\path[transform canvas={xshift=-9mm},->,font=\scriptsize,>=angle 90]
(C2) edge node[right] {$\sum 1\otimes \mathrm{g}$} (A2);
\path[->,font=\scriptsize,>=angle 90]
(C3) edge node[left] {$(\mathrm{g}\circ 1)_{C}^A$} (A3);
\path[transform canvas={xshift=15mm},->,font=\scriptsize,>=angle 90]
(A1) edge node[left] {$\sum\mu^{(\mathscr{R})}(1\otimes\mathrm{g}\otimes 1)$} (B1);
\path[transform canvas={xshift=-9mm},->,font=\scriptsize,>=angle 90]
(A2) edge node[right] {$\sum \mu^{(\mathscr{R})}$} (B2);
\path[->,font=\scriptsize,>=angle 90]
(A3) edge node[left] {$\mu^{(\widetilde{\mathscr{R}})}$} (B3);
\end{tikzpicture}
\end{equation*}
The middle column is a coproduct of left fusion operators $\mathrm{v}_{M,N}^\mathrm{l}$ for $\mathscr{G}$ and so is invertible.
The left column can be rewritten as a sum of composites of the form  $\mathrm{v}_{\CB,N}^\mathrm{l} (\mathrm{v}_{M,\CB}^\mathrm{l}\otimes 1_{N^B_C})$ and so is invertible.
So $\mathrm{v}_{N,M}^\mathrm{r}$ is invertible.
\end{proof}
\begin{remark}
That ${\mathscr{G}}$ is left Hopf while $\widetilde{\mathscr{G}}$ is right Hopf in Theorem~\ref{thm:comonadicModU} is expected since our notational conventions lead to a morphism reversing inclusion $\CI : \CW^{\mathrm{op}} \to \CW \text{-}\mathrm{Mod}$; see Section 7.6 of \cite{Kelly2002}.  
\end{remark}
\begin{example} 
In the diagram \eqref{diag:adjs}
functors $U$ and $\Sigma$ forget differential and take a sum of all components of the graded abelian group. They are both part of Hopf-comonadic adjunctions, and by Theorem \ref{thm:createKan} create duals and cohoms. An abelian group $A$ has a dual if and only if it is finitely generated and projective (\cite{Street2007} has a proof). As a consequence of $\Sigma$ being Hopf-comonadic, a graded abelian group $A$ has a dual if and only if it has finitely many non-zero components each of which is finitely generated and projective. As a consequence of $U$ being Hopf-comonadic, a chain complex $A$ has a dual if and only if its underlying graded abelian group does. 
\end{example}
\begin{example}
Since the $U$ of diagram~\eqref{diag:adjs} is a left adjoint it preserves colimits, so by Theorem \ref{thm:comonadicModU} 
the change of base functor $\widetilde{\CU}$ creates Cauchy modules.
\end{example}

\begin{example}
As in Remark~\ref{CEgood}, suppose we have a cocomplete category $\CE$ with each slice category $\CE/E$ cartesian closed and we have a comonad $G$ on $\CE$ which preserves pullbacks. 
If furthermore $G$ preserves colimits then $\CR$
preserves local colimits as required by Theorem~\ref{thm:HopfnessModU} with $\CV = \mathrm{Spn}(\CE)$.

Let us look, in particular, at the case where $G = E\times - $, with the 
comonad structure defined by the diagonal comonoid structure on $E\in \CE$, so that $\CE^G = \CE/E$.
The comonad $\CG$ on $\mathrm{Spn}(\CE)$ in $\mathrm{Caten}$ has objects those of $\CE/E$; 
that is, pairs $(X,p)$ where $X\xra{p} E$ in $\CE$. The functor
\begin{eqnarray*}
\CG_{(X,p)}^{(X',p')} : \mathrm{Spn}(\CE)(X,X') \lra \mathrm{Spn}(\CE)(X,X')
\end{eqnarray*}
 takes a span $(u,S,v) : X\to X'$ to the span $(uk,\bar{S},vk) : X\to X'$ where $\bar{S}\xra{k} S$ is the equalizer of $pu$ and $p'v$. Let us use the method of generalised elements in $\CE$ to investigate the fusion operators: a $Z$-element of $A$ is a morphism $a : Z\to A$ and we write $a\in_{Z}A$. The $Z$-elements of $\bar{S}$ are the $x\in_ZS$
 such that $pux=p'vx$; so $(u,S,v) : (X,p)\to (X',p')$ in $\mathrm{Spn}(\CE/E)$ if and only if 
 $k : \bar{S}\hookrightarrow S$ is invertible.  We also see that 
 $$
\CG_{(X',p')}^{(X'',p'')}(u',S',v')\otimes \CG_{(X,p)}^{(X',p')}(u,S,v) = (u\mathrm{pr}_1,P,v'\mathrm{pr}_2)
$$
where a $Z$-element of $P$ is a pair $(x,x')\in_{Z}S\times S'$ such that 
\begin{eqnarray}\label{P}
vx=u'x' \ , & pux=p'vx\ , & p'u'x'=p''v'x' \ ,
\end{eqnarray}
while 
$$\CG_{(X,p)}^{(X'',p'')}(u',S',v')\otimes (u,S,v) = (u\mathrm{pr}_1,Q,v'\mathrm{pr}_2)$$   
where a $Z$-element of $Q$ is a pair $(x,x')\in_{Z}S\times S'$ such that 
\begin{eqnarray}\label{Q}
vx=u'x' \ , & pux=p''v'x' \ .
\end{eqnarray}
We see that $P$ is a subobject of $Q$ and the component of $\mu_{(X,p) (X'',p'')}^{(X'p')}$ is the inclusion $P \hookrightarrow Q$ as a span morphism. It is now an easy calculation to see that the fusion operators are invertible: for, when $(u,S,v)$
is a span over $E$, we have the equation $pu = p'v$ which, together with equations \eqref{Q}, implies equations \eqref{P}. 
So {\em $\CG$ is Hopf}. 
It follows from Theorems~\ref{thm:HopfnessModU}~and~\ref{thm:createKan} that {\em the
pseudofunctor 
$$\widetilde{\CU} : \mathrm{Spn}(\CE/E)\text{-}\mathrm{Mod} \lra \mathrm{Spn}(\CE)\text{-}\mathrm{Mod}$$
creates left extensions and left liftings}.
\end{example}

\begin{remark}\label{monoidcasecomonadicHopf}
Theorem~\ref{thm:HopfnessModU} restricts to the one-object case.
In the situation of Remark~\ref{monoidcasecomonadic}, if the monoidal comonad $\CG = \CR\circ \CU$ on $\CC$ is Hopf
and preserves reflexive coequalizers
then the comonad $\widetilde{\CG}= \widetilde{\CR}\circ \widetilde{\CU}$ in $\mathrm{Caten}$ is Hopf.
\end{remark}


\end{document}